\newtheorem{proposition}{Proposition}
\newtheorem{theorem}[proposition]{Theorem}
\newtheorem{lemma}[proposition]{Lemma}
\theoremstyle{remark}
\newtheorem{remark}[proposition]{Remark}
\theoremstyle{definition}
\numberwithin{equation}{section}
\numberwithin{proposition}{section}
\numberwithin{table}{section}
\renewcommand{\leq}{\leqslant}
\renewcommand{\geq}{\geqslant}
\renewcommand{\epsilon}{\varepsilon}
\title[Absence of weak disorder for
polymers on percolation clusters]{Absence of weak disorder for directed
polymers on supercritical percolation clusters}
\author[M.\ Nitzschner]{
Maximilian Nitzschner}
\address{Department of Mathematics, The Hong Kong University of Science and Technology}
\curraddr{Clear Water Bay, Kowloon, Hong Kong}
\email{mnitzschner@ust.hk}
\begin{document}

\begin{abstract}

We study the directed polymer model on infinite clusters of supercritical Bernoulli percolation containing the origin in dimensions $d \geq 3$, and prove that for almost every realization of the cluster and every strictly positive value of the inverse temperature $\beta$, the polymer is in a strong disorder phase, answering a question from Cosco, Seroussi, and Zeitouni~\cite{cosco2021directed}.

\bigskip

\noindent \textsc{MSC 2020:} 82B44; 82B43; 60K35

\medskip

\noindent \textsc{Keywords:} Directed polymers, percolation, weak disorder phase

\end{abstract}
\maketitle

\tableofcontents

%
%
%
%
%
%
%
%
\section{Introduction}

In this article, we study the model of a directed polymer in a random environment in which the underlying graph consists of a supercritical cluster of Bernoulli (bond) percolation on $\mathbb{Z}^d$ with $d \geq 3$. \medskip

The model of directed polymers on $\mathbb{Z}^d$ has a rich history both in the physics and mathematics literature, and we refer to~\cite{comets2017directed,den2009random} for a thorough introduction. The case of directed polymers on other graphs beyond $\mathbb{Z}^d$ (such as downward paths on trees, diffusions on the $d$-dimensional discrete torus or on the cylinder, or simple random walk on the complete graph) has also been studied (see, e.g.,~\cite{brunet2000probability,buffet1993directed,
comets2019random,derrida1988polymers,eckmann1989largest}).
Motivated by the work~\cite{seroussi2018spectral} on stochastic dynamics in large, finite networks, in the recent article~\cite{cosco2021directed}, the authors consider polymer models on general infinite graphs, which includes results on random graphs such as percolation clusters or Galton-Watson trees, with an emphasis on new phenomena emerging for polymer models when the underlying graph structure is modified. We also refer to~\cite{carmona2006strong} for results on polymers where the underlying walk is a general Markov chain, and to~\cite{kajino2020two} concerning polymers on strongly recurrent graphs similar to those in~\cite{cosco2021directed}. In the present work, we investigate a polymer in a random environment defined on a typical realization of an infinite cluster in Bernoulli (bond) percolation on $\mathbb{Z}^d$ for $p \in (p_c(d),1)$ (with $p_c(d)$ the percolation threshold), containing the origin. As our main contribution, we establish that for almost every realization of such a percolation cluster in $d \geq 3$, there is no non-trivial weak disorder regime for the polymer, answering a question from~\cite[Section 7]{cosco2021directed}. \medskip

We now describe our model and main result in more detail. Denote by $E(\mathbb{Z}^d)$ the set of nearest-neighbor edges in the Euclidean lattice and consider for $p \in [0,1]$ a probability measure $\mathbb{Q}$ on $\Omega_0 = \{0,1 \}^{E(\mathbb{Z}^d)}$ endowed with its canonical $\sigma$-algebra $\mathcal{A}$ such that the canonical coordinates $(\mu_e)_{e \in  E(\mathbb{Z}^d)}$ are i.i.d.~Bernoulli random variables with parameter $p$. We let $p_c(d)$ stand for the critical parameter for bond percolation, and recall that for $p > p_c(d)$, there is a $\mathbb{Q}$-a.s.~unique infinite connected component of open bonds denoted by $\mathscr{C}_\infty$ (see~\cite{grimmett1999percolation}). Throughout the remainder of this article, we will assume that
\begin{equation}
p_c(d) < p < 1,
\end{equation}
and we will often consider
\begin{equation}
\label{eq:Q0Def}
\mathbb{Q}_0 = \mathbb{Q}\left[ \cdot \, | \, 0 \in \mathscr{C}_\infty \right],
\end{equation}
(where for $z \in \mathbb{Z}^d$, $\{z \in \mathscr{C}_\infty \} \in \mathcal{A}$ is the event of positive $\mathbb{Q}$-probability that there exists a unique infinite open cluster containing the vertex $z$). The probability of the event $\{0 \in \mathscr{C}_\infty\}$ will be denoted by $\theta(p)$. For $\mu \in \{0 \in \mathscr{C}_\infty \}$, we can consider the discrete-time simple random walk $(X_k)_{k \geq 0}$ on the percolation cluster $\mathscr{C}_\infty$ started at the origin, and denote the corresponding governing probability measure by $P_{0,\mu}$ (we refer to Section~\ref{s.notation} for precise definitions). To define the polymer measure, we consider i.i.d.~random variables
\begin{equation}
\label{eq:iidEnvironment}
\{\omega(i,x) \, : \, i \in \mathbb{N}_0, x \in \mathbb{Z}^d \}, 
\end{equation}
with mean zero and unit variance, governed by a probability measure $\mathbb{P}$, called the \textit{environment measure}. We also assume that these random variables possess exponential moments, see~\eqref{eq:LambdaDef}, and we define the $\sigma$-algebra $\mathcal{G}_n = \sigma(\omega(i,x) \, : \, i \leq n, x \in \mathbb{Z}^d)$, for $n \in \mathbb{N}_0$. The \textit{polymer measure} of time horizon $n \in \mathbb{N}_0$, inverse temperature $\beta \geq 0$, and percolation configuration $\mu \in \{0 \in \mathscr{C}_\infty \}$ is a probability measure defined on paths $X = (X_k)_{k \geq 0}$ by
\begin{equation}
\mathrm{d}P_{0,\mu}^{n,\beta}[X] = \frac{1}{Z_{n,\mu}(\beta,\omega)} \exp\left(\beta \sum_{i = 1}^n \omega(i,X_i) \right) \mathrm{d}P_{0,\mu}[X], 
\end{equation}
with the \textit{partition function}
\begin{equation}
Z_{n,\mu}(\beta,\omega) = E_{0,\mu}\left[\exp\left(\beta \sum_{i = 1}^n \omega(i,X_i) \right)  \right],
\end{equation}
(here $E_{0,\mu}$ stands for the expectation under the probability measure $P_{0,\mu}$). Various methods exist to characterize the asymptotic behavior of the directed polymer for large $n$. A key role in this context is played by the \textit{normalized partition function} 
\begin{equation}
\label{eq:NormPartFct}
W_{n,\mu}  = \frac{Z_{n,\mu}}{\mathbb{E}[Z_{n,\mu}]},
\end{equation}
which is easily shown to be a non-negative, mean-one $(\mathcal{G}_n)_{n \geq 0}$-martingale and therefore converges $\mathbb{P}$-a.s.~to a limit $W_{\infty,\mu}$ (note that $W_{n,\mu}$ and $W_{\infty,\mu}$ also depend on $\beta$ and $\omega$, but we suppress this dependence in the notation). In a more general context of locally finite graphs, it has been established in~\cite[Proposition 1.2]{cosco2021directed} that there exists $\beta_c$ ( $= \beta_c(\mu)$, for the situation considered here) $\in [0,\infty]$ such that
\begin{equation}
\begin{split}
W_{\infty,\mu} > 0, \qquad \mathbb{P}\text{-a.s.} & \text{ for all }\beta < \beta_c \qquad (\textit{weak disorder}), \\
W_{\infty,\mu} = 0, \qquad \mathbb{P}\text{-a.s.} & \text{ for all }\beta > \beta_c \qquad (\textit{strong disorder}).
\end{split}
\end{equation}
For the model of directed polymers associated to a simple random walk on the full lattice $\mathbb{Z}^d$ (corresponding informally to $p = 1$), it is known that $\beta_c = 0$ in $d = 1,2$ and $\beta_c > 0$ for $d \geq 3$, see~\cite{comets2017directed,lacoin2010new}. Crucially, the weak disorder phase essentially corresponds to a region of \textit{delocalization} for the directed polymer, and assuming uniform integrability for the normalized partition function (a condition slightly stronger than weak disorder), one can prove an almost sure central limit theorem for the polymer path, see~\cite{comets2006directed} for the result on $\mathbb{Z}^d$ (in this case, the result in fact holds in the entire weak disorder region), and~\cite[Theorem 1.3]{cosco2021directed} for its adaptation to general graphs. On the other hand, strong disorder implies certain localization results for the polymer path and we refer to~\cite{carmona2002partition,comets2003directed} for statements on $\mathbb{Z}^d$ and~\cite[Theorem 1.6]{cosco2021directed} for their adaptation to general graphs. In our main result, we show that in contrast to directed polymers on a high-dimensional full lattice, the critical inverse temperature vanishes, for $\mathbb{Q}_0$-a.e.~realization of $\mu$.

\begin{theorem}
\label{thm:MainTheorem}
Let $d \geq 3$ and $p \in (p_c(d),1)$. Then, for $\mathbb{Q}_0$-a.e.~realization of $\mu \in \{0 \in \mathscr{C}_\infty \}$, one has
\begin{equation}
\beta_c(\mu) = 0.
\end{equation}
\end{theorem}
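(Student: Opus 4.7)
The plan is to show strong disorder for every $\beta > 0$ by establishing that $\mathbb{E}[W_{n,\mu}^\theta] \to 0$ for some $\theta \in (0,1)$; since $W_{n,\mu}^\theta$ is a nonnegative supermartingale by concavity of $x \mapsto x^\theta$, this forces $\mathbb{E}[W_{\infty,\mu}^\theta] = 0$, hence $W_{\infty,\mu} = 0$ $\mathbb{P}$-a.s. The key structural input is that although the simple random walk on the supercritical cluster is transient for $d \geq 3$, the cluster itself contains abundant \emph{dangling arms}---long paths of degree-two vertices terminating at a degree-one vertex---and an arm of length $L$ forces the walk, once entered, to perform a genuinely one-dimensional excursion. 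The polymer energy accumulated along such an excursion factors as a 1D polymer partition function, and since the 1D directed polymer has $\beta_c = 0$ with known quantitative fractional-moment decay, this 1D factor can be driven to zero and leveraged to kill $W_{n,\mu}$ on the full cluster.

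Concretely, I would proceed in three steps. First, via a standard Borel--Cantelli / ergodicity argument on $(\Omega_0, \mathbb{Q})$, I would show that for $\mathbb{Q}_0$-a.e.~$\mu$ and every $L \geq 1$, $\mathscr{C}_\infty$ contains infinitely many disjoint arms of length $L$ attached at accessible vertices $v_1(\mu), v_2(\mu), \ldots$; the local pattern defining such an arm is a deterministic box event of positive uniform $\mathbb{Q}$-probability. Second, I would import a quantitative one-dimensional strong-disorder estimate of the form $\mathbb{E}[(W_L^{\mathrm{1D}})^\theta] \to 0$ as $L \to \infty$, valid for every $\beta > 0$ (e.g.~via Carmona--Hu or Comets--Shiga--Yoshida). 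Third, I would decompose $Z_{n,\mu}$ by conditioning on the walk reaching one of the arms from Step 1, entering it, and executing an excursion of length comparable to $L$ before exiting. Using the Markov property of the walk together with the independence of $\omega$ on the space--time slab supporting the arm, this isolates a genuine 1D polymer partition function; taking $\theta$-th moments and combining with Step 2 should yield $\mathbb{E}[W_{n,\mu}^\theta] \to 0$.

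The main technical obstacle lies in Step 3: one must make the factorization faithful enough to apply the 1D estimate while absorbing the contribution of the remainder (walks that ignore the arm, and the ``pre-arm'' factor). This calls for a careful coarse-graining that pairs each arm with an independent slab of the environment $\omega$, conditions on the walk's entry and exit times into the arm, and handles possible multiple re-entries without double-counting. Because the 1D fractional moment decays rapidly in $L$, one can afford to be lossy in the bulk factor. Finally, an outer cluster-ergodicity argument---using that $\mathbb{Q}$ is translation-invariant and ergodic, so that the arm-counting statements are shift-invariant---upgrades positive-$\mathbb{Q}_0$-probability versions of these statements to $\mathbb{Q}_0$-almost-sure ones, and hence to $\beta_c(\mu) = 0$ for $\mathbb{Q}_0$-a.e.~$\mu$.
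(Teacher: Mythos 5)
Your overall strategy---fractional moments plus forcing the walk into effectively one-dimensional dead-end structures of the cluster---is the same skeleton as the paper's (whose ``open tubes'' play the role of your dangling arms), but as written the proposal has two gaps located precisely where the real work lies. First, Step 1 is far too weak: for a transient walk in $d\geq 3$, the existence of infinitely many arms of length $L$ \emph{somewhere} in $\mathscr{C}_\infty$ gives no control on the probability that the walk actually visits one before time $n$; and by Jensen the contribution to $\mathbb{E}[W_{n,\mu}^\theta]$ of trajectories that ignore all arms is bounded only by $P_{0,\mu}[\text{no arm visited}]^\theta$, so you need this avoidance probability to tend to $0$, not merely to be bounded away from $1$. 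This requires a quantitative density statement (arms of length $\varepsilon\log n$ present in every mesoscopic box of side $n^{1/4}$ around every point of $B(0,n)$, with quantitative concentration of their number---the paper obtains this via an exponential Efron--Stein argument following Dario--Gu) combined with quenched Gaussian heat-kernel lower bounds on the cluster (Barlow) to show the walk enters such an arm with probability at least $n^{-\eta}$ in each block of $[\sqrt n]$ steps, which is then iterated over $\sim\sqrt n$ blocks. Nothing in your Borel--Cantelli/ergodicity step produces this; it is the main technical content of the paper.

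Second, the bookkeeping in Step 3 does not close if you only import the 1D strong-disorder estimate. The decomposition over (arm location, entry time) has on the order of $n^{d+1}$ terms, and since $\theta<1$ you cannot absorb this entropy into the entrance probabilities; each term must therefore carry a gain that is super-polynomially small in $n$. An excursion of duration comparable to the arm length $L$ contributes a 1D fractional moment of order $\exp(-c(\beta)L)$, and since arms much longer than $O(\log n)$ are too rare to be dense, $L$ is at best of order $\varepsilon\log n$, giving a gain $n^{-c(\beta)\varepsilon}$ with $c(\beta)\to 0$ as $\beta\to 0$; for small $\beta$ this can never beat $n^{d+1}$, and with $L$ fixed (as in your Step 1) the gain is only a constant. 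The paper resolves exactly this tension not by invoking the 1D result but by a Lacoin-type change of measure: tilt the environment down by $\delta_n=(\log n)^{-7/4}$ on the space-time box of size $\approx(\log n)^4$ around the tube, at H\"older cost $\exp(C\varepsilon^4(\log n)^{1/2})$, and restrict to walks spending $[\varepsilon\log n]^3$ steps in the tube (an event of probability $\approx n^{-c\varepsilon}$, which is affordable), yielding a gain $\exp(-c\Lambda'(\beta)\varepsilon^3(\log n)^{5/4})$ that beats the polynomial entropy for every $\beta>0$. Without an ingredient of this type, or a scheme accumulating gains over many arm visits with controlled entropy, your Step 3 cannot deliver $\mathbb{E}[W_{n,\mu}^\theta]\to 0$ for all $\beta>0$.
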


This answers question (3.) from~\cite[Section 7]{cosco2021directed}, and we remark that for $d = 2$, the fact that $\beta_c(\mu) = 0$ for $\mathbb{Q}_0$-a.e.~realization of $\mu \in \{0 \in \mathscr{C}_\infty\}$ was already proved in the same reference. To give some context, let us point out that another critical parameter $\beta_2 \in [0,\infty]$ corresponding to a region $0 \leq \beta < \beta_2$ known as \textit{$L^2$-phase} is often introduced for general polymer models. Essentially, there one requires that the martingales $(W_{n,\mu}(\beta))_n$ are uniformly bounded in $L^2$, which facilitates certain moment calculations. It is immediate that $\beta_2 \leq \beta_c$, and in fact one has $0 < \beta_2 < \beta_c$ on $\mathbb{Z}^d$ in dimensions $d \geq 3$, see~\cite{berger2010critical,birkner2011collision,birkner2010annealed,birkner2011disorder}. As a result of~\cite[Theorem 5.5]{cosco2021directed}, $\beta_2(\mu) = 0$ for $\mathbb{Q}_0$-a.e.~realization of the the supercritical percolation cluster $\mu$ on $\mathbb{Z}^d$, $d \geq 2$. Note however that there are graphs for which one has $0 = \beta_2 < \beta_c$, see~\cite[Theorem 5.7]{cosco2021directed}. \medskip

Let us briefly comment on the proof of Theorem~\ref{thm:MainTheorem}. The main point is to establish that for every $\beta > 0$ and $\mathbb{Q}_0$-a.e.~realization of $\mu$, the martingale $(W_{n,\mu}(\beta))_{n}$ converges to $0$. Similarly as in~\cite[Section 4.3]{cosco2021directed}, we utilize a \textit{fractional moment} calculation based on a change of measure argument introduced in~\cite{lacoin2010new}. A central challenge is to find regions in space-time which are typically visited by the unconstrained underlying random walk, that allow for an effective implementation of the aforementioned change of the environment measure in these regions at a ``relatively low cost''. In our context, these regions will be certain \textit{tubes} of length proportional to $\log n$, meaning straight lines of open edges with all neighboring perpendicular edges closed, which typically appear frequently in a box of size $O(n^\gamma)$ (for any $\gamma > 0$) and are visited by the walk before time $n$. While for the proof of $\beta_2(\mu) = 0$ in~\cite{cosco2021directed}, the existence of such large tubes somewhere in the cluster was sufficient, in the present context we require quantitative bounds that show that the random walk on the percolation cluster tends to enter such a tube with high probability. The last property requires the use of some quenched heat kernel bounds on the infinite cluster to enter such tubes, and more importantly a control on the probability of their existence in the cluster, which is the main technical part in this article. For this, we utilize the methods developed in~\cite[Appendix A]{dario2021quantitative} in the context of quantitative homogenization of the Green's function on the infinite cluster of supercritical Bernoulli percolation. \medskip

The remainder of this article is organized as follows: In Section~\ref{s.notation}, we introduce further notation and state some useful known results, in particular on Bernoulli percolation and quenched heat kernel bounds for the random walk on the infinite cluster. In Section~\ref{s.FracMoments}, we utilize the fractional moment calculation alluded to above to prove Theorem~\ref{thm:MainTheorem}, conditional on the fact that the random walk enters some tube of length $\varepsilon \log(n)$ and takes at least $(\varepsilon \log(n))^3$ steps there before time $n$. This last part is formulated in Proposition~\ref{prop:NoTubeAvoidance}, and we present its proof in the final Section~\ref{s.ProofMainProp}. \medskip

Finally, we state the convention used for constants. By $C,c,c',...$ we denote positive constants depending only on $d$ which may change from place to place. Numbered constants such as $c_1,c_2,...$ refer to the value assigned to them at their first appearance in the text. Dependence of constants on additional parameters will be mentioned explicitly in the notation.

%
%
%
%
%
%
%
%
\section{Notation and useful results}
\label{s.notation}

In this section we introduce further notation used in the remainder of the article and collect several useful results concerning the random walk on $\mathbb{Z}$ and on the infinite cluster of supercritical Bernoulli percolation, mainly for later use in Section~\ref{s.ProofMainProp}. We also include proofs of some standard bounds for completeness. Throughout the article, unless stated otherwise, we tacitly assume $d \geq 3$. \medskip

We start with some elementary notation. We use the convention $\mathbb{N}_0 = \{0,1,2,...\}$ for the set of non-negative integers and set $\mathbb{N} = \mathbb{N}_0 \setminus \{0\}$. As in the introduction, we let $\mathbb{Z}^d$ stand for the $d$-dimensional integer lattice and we write $\mathbb{Z}^d_o$ resp.~$\mathbb{Z}^d_e$ for the points $x \in \mathbb{Z}^d$ such that $\sum_{i = 1}^n x_i$ is odd resp.~even. For real numbers $s,t$, $s \vee t$ and $s \wedge t$ denote the maximum and minimum of $s$ and $t$, respectively, and we denote the integer part of $s$ by $[s]$. We also write $s_+ = s \vee 0$ for the positive part of $s$. We denote by $| \, \cdot \, |$, $| \, \cdot \, |_1$, and $| \, \cdot \, |_\infty$ the Euclidean, $\ell^1$- and $\ell^\infty$-norms on $\mathbb{R}^d$, respectively. If $x,y \in \mathbb{Z}^d$ fulfill $|x-y|_1 = 1$, we say that $x$ and $y$ are adjacent (or nearest neighbors) and write $x \sim y$. Unordered pairs $\{x,y\}$ of vertices $x,y \in \mathbb{Z}^d$ are called edges if $x \sim y$, and denote the set of all edges by $E(\mathbb{Z}^d)$. For a set $K \subseteq \mathbb{Z}^d$, we let $E(K)$ stand for the set of edges $\{x,y\} \in E(\mathbb{Z}^d)$ with $K \cap \{x,y\} \neq \emptyset$. For $x \in \mathbb{Z}^d$ and $r \geq 0$, we let $B(x,r) = \{y \in \mathbb{Z}^d \, : \, |x-y|_\infty \leq r\} \subseteq \mathbb{Z}^d$ stand for the closed $\ell^\infty$-ball of radius $r$ and center $x$. Occasionally we also need the sets $B_o(x,r) = B(x,r) \cap \mathbb{Z}^d_o$ resp.~$B_e(x,r) = B(x,r) \cap \mathbb{Z}^d_e$, consisting of the points in the box $B(x,r)$ with odd resp.~even parity. The cardinality of a set $K \subseteq \mathbb{Z}^d$ is denoted by $|K|$. The $\ell^1$-distance between two sets $U_1,U_2 \subseteq \mathbb{Z}^d$ is defined by $d_{\ell^1}(U_1,U_2) = \inf\{|x - y|_1 \, : \, x \in U_1, y \in U_2\}$ (with the convention $\inf \emptyset = \infty$). \medskip

We turn to some more notation concerning the environment attached to the directed polymer model. Recall the notation~\eqref{eq:iidEnvironment} for the i.i.d.~random variables $\omega(i,x)$ under $\mathbb{P}$ indexed by $\mathbb{N}_0 \times \mathbb{Z}^d$, and the convention that
\begin{equation}
\label{eq:AssumptionsExpVar}
\mathbb{E}[\omega(i,x)] = 0, \qquad \mathbb{E}[\omega(i,x)^2] = 1, \qquad (i,x) \in \mathbb{N}_0 \times \mathbb{Z}^d,
\end{equation}
where $\mathbb{E}$ stands for the expectation under $\mathbb{P}$. We use the notation 
\begin{equation}
\label{eq:LambdaDef}
\Lambda(\beta) = \log \mathbb{E}[\exp(\beta \omega(i,x))], \qquad \beta \in \mathbb{R},
\end{equation}
and assume throughout the remainder of the article that $\Lambda(\beta)$ is finite for all $\beta \geq -a$ for some $a > 0$. With this definition, one can bring the normalized partition function introduced for $\mu \in \{0 \in \mathscr{C}_\infty\}$ in~\eqref{eq:NormPartFct} into the form
\begin{equation}
\label{eq:NormPartFctRewritten}
W_{n,\mu} = Z_{n,\mu} \exp\left( - n\Lambda(\beta) \right), \qquad n \in \mathbb{N}_0, \beta \geq 0,
\end{equation}
(where we again suppress the dependence on $\beta$ and $\omega$ for notational convenience). For later use, we record that by our assumptions~\eqref{eq:AssumptionsExpVar} on the mean and the variance of $\omega(i,x)$ and on $\Lambda$ below~\eqref{eq:LambdaDef}, one has for sufficiently small $\delta > 0$ that
\begin{equation}
\label{eq:UsefulExpressionLambda}
\exp(\Lambda(-\delta)) = \mathbb{E}\left[\exp(-\delta \omega(i,x) )\right] = \exp\left(c_\delta \frac{\delta^2}{2}\right), \qquad c_\delta = 1 + o_\delta(1) \text{ as $\delta \rightarrow 0$}.
\end{equation}

\medskip

We now turn to the random walk on $\mathbb{Z}$ as well as on the infinite connected component of the supercritical Bernoulli percolation cluster. We start with the former by recording a simple lower bound on an upward deviation of the exit time from a symmetric interval. This bound will be used in Section~\ref{s.ProofMainProp} to control the probability that a simple random walk on the supercritical cluster spends an atypically large time inside an ``open tube'' (which is effectively one-dimensional) upon visiting its center. Its simple proof is given for completeness.
\begin{lemma}
\label{lem:DeviationExitTime}
Let $(X_n)_{n \geq 0}$ stand for the symmetric simple random walk on $\mathbb{Z}$ starting in $x \in \mathbb{Z}$ governed by $P_x^{\mathbb{Z}}$ and let $\tau_{K} = \inf\{ n \in \mathbb{N}_0 \, : \, Z_n \notin \{-K,...,K\} \}$ denote the exit time of $(X_n)_{n \geq 0}$ from the interval $[-K,K] \cap \mathbb{Z}$, with $K \in \mathbb{N}$, $K \geq 10$. Then 
\begin{equation}
P_0^{\mathbb{Z}}[\tau_K \geq K^3] \geq \exp(-cK).
\end{equation}
\end{lemma}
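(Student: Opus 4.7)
The plan is to exploit the Markov property together with the diffusive scaling of the simple random walk: a typical exit time from $[-K, K]$ is of order $K^2$, so to survive up to time $K^3$ one needs roughly $K$ consecutive ``lucky'' survival episodes of length $K^2$, each occurring with some uniform positive probability $p_0$, which then yields a lower bound of order $p_0^K = \exp(-cK)$.

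The first step I would carry out is a one-block estimate: there exists $p_0 > 0$, independent of $K$, such that for every $K \geq 10$ and every $x \in [-K/2, K/2] \cap \mathbb{Z}$,
\begin{equation*}
P_x^{\mathbb{Z}}\bigl[\tau_K \geq K^2 \text{ and } X_{K^2} \in [-K/2, K/2] \bigr] \geq p_0.
\end{equation*}
This is a standard consequence of the invariance principle: after rescaling space by $K$ and time by $K^2$, the walk started at $x/K \in [-1/2, 1/2]$ converges weakly to a standard Brownian motion on $\mathbb{R}$, and the corresponding event for Brownian motion (stays in $[-1, 1]$ up to time $1$ and is located in $[-1/2, 1/2]$ at time $1$) has probability bounded away from $0$ uniformly in the starting point. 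An equivalent route is a spectral computation on the discrete Laplacian killed outside $[-K, K]$, whose largest eigenvalue equals $\cos(\pi/(2K+2)) = 1 - O(K^{-2})$.

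Second, I would iterate the one-block estimate by applying the strong Markov property at the times $jK^2$, $j = 1, \ldots, K-1$. Conditionally on the walk being in $[-K/2, K/2]$ and not yet having left $[-K, K]$ at time $jK^2$, the one-block estimate applied from the current position supplies an additional factor $p_0$ for staying in $[-K,K]$ up to time $(j+1)K^2$ and being in $[-K/2, K/2]$ at that time. Multiplying over $j = 0, \ldots, K-1$ yields
\begin{equation*}
P_0^{\mathbb{Z}}[\tau_K \geq K^3] \geq p_0^K = \exp(-cK), \qquad c = \log(1/p_0),
\end{equation*}
which is exactly the claimed bound.

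The only mildly subtle point is establishing the one-block estimate uniformly in $K \geq 10$ and in $x \in [-K/2, K/2] \cap \mathbb{Z}$. For large $K$ this follows at once from Donsker's invariance principle combined with the positivity and continuity of the relevant Brownian functional, while for the finitely many small values of $K$ the inequality holds trivially upon enlarging $c$ if needed. I therefore do not anticipate any serious obstacle beyond some careful bookkeeping of the Markov iterations.
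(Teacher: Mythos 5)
Your proof is correct and follows essentially the same strategy as the paper: a one-block estimate stating that from any point of $[-[K/2],[K/2]]$ the walk stays in $[-K,K]$ for $K^2$ steps and ends again in $[-[K/2],[K/2]]$ with probability bounded below uniformly in $K$, iterated via the Markov property at times $K^2, 2K^2,\ldots$ to give $p_0^{K}=\exp(-cK)$. The only difference is cosmetic: the paper derives the one-block bound from a Gaussian lower bound for the killed heat kernel (citing Barlow), whereas you invoke Donsker's invariance principle (or a spectral bound), both of which are standard and adequate.
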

\begin{proof}
We denote by $p^{\ast,K}_n(x,y) = P_x^{\mathbb{Z}}[X_n =y, \tau_K > n]$ the heat kernel for the simple random walk killed upon exiting $[-K,K] \cap \mathbb{Z}$, which fulfills 
\begin{equation}
p^{\ast,K}_n(x,y) + p^{\ast,K}_{n+1}(x,y) \geq \frac{c}{\sqrt{n}} \exp\left( - \frac{c'|x-y|^2}{n} \right) 
\end{equation}
for $x,y \in [-[K/2], [K/2]] \cap \mathbb{Z}$ and $|x-y| \leq n \leq K^2$ (see, e.g.,~\cite[Theorem 4.25]{barlow2017random}). In particular, for every $x \in [-[K/2], [K/2]] \cap \mathbb{Z}$, we have $P_x[X_{K^2} \in [-[K/2], [K/2]] \cap \mathbb{Z} , \tau_K > K^2] \geq c$. The claim then follows by a $(K-1)$-fold application of the simple Markov property at times $\{K^2,2K^2,...,K^3 - K^2\}$.
\end{proof}

Next, we turn to the random walk on $\mathscr{C}_\infty$ and introduce some further notation. Recall the notation $\mathbb{Q}$ for the probability measure on $(\{0,1\}^{E(\mathbb{Z}^d)}, \mathcal{A})$ governing the Bernoulli percolation from the introduction as well as the conditional measure $\mathbb{Q}_0$ in~\eqref{eq:Q0Def}. For a configuration $\mu \in \{0,1 \}^{E(\mathbb{Z}^d)}$, we consider the discrete-time Markov chain on $\mathbb{Z}^d$ with jump probabilities $r_\mu(x,y)$ for $x,y \in \mathbb{Z}^d$ given by
\begin{equation}
\begin{split}
r_\mu(x,x) & = 1, \text{ if }\mu(e) = 0 \text{ for all } e \in E(\mathbb{Z}^d) \text{ with } e \cap \{ x\} \neq \emptyset, \\
r_\mu(x,y) & = \frac{1}{\sum_{z \in \mathbb{Z}^d \, : \, z \sim x} \mu(\{x,z\}) }, \text{ if } x \sim y \text{ and } \mu(\{x,y\}) = 1, \\
r_\mu(x,y) & = 0, \text{ otherwise}.
\end{split} 
\end{equation}
We let $P_{x,\mu}$ stand for the canonical law on $(\mathbb{Z}^d)^{\mathbb{N}_0}$ of the chain started at $x \in \mathbb{Z}^d$ and denote the canonical coordinates by $(X_n)_{n \geq 0}$. For $\mu \in \{x \in \mathscr{C}_\infty \}$, the process $(X_n)_{n \geq 0}$ under $P_{x,\mu}$ is the discrete-time simple random walk on the unique infinite cluster $\mathscr{C}_\infty$ starting in $x$.  \medskip 

We now note some known results from~\cite{barlow2004random} concerning heat kernel bounds for the random walk on the infinite cluster of supercritical Bernoulli percolation for later use. There exists a set $\Omega_1 \in \mathcal{A}$ of full $\mathbb{Q}$-measure and random variables $(R_x)_{x \in \mathbb{Z}^d}$ with 
\begin{equation}
R_x < \infty, \qquad \text{ for }\mu \in \Omega_1, x \in \mathscr{C}_\infty,
\end{equation}
such that for $\mu \in \Omega_1$, $x,y \in \mathscr{C}_\infty$, and $n \geq R_x(\mu) \vee |x-y|_1$ one has the following Gaussian lower bound on the heat kernel:
\begin{equation}
\label{eq:HKLB}
\begin{split}
P_{x,\mu}[X_n = y \text{ or } X_{n+1} = y] & \geq \frac{c}{n^{\frac{d}{2}}} \exp\left(-\frac{c'|x-y|^2}{n} \right)
\end{split}
\end{equation}
(we use the convention $R_x(\mu) = \infty$ if $\mu \notin \Omega_1$, or $\mu \in \Omega_1$ but $x \notin \mathscr{C}_\infty$). Furthermore, we have for $x \in \mathbb{Z}^d$, $n \geq 0$, the bound
\begin{equation}
\mathbb{Q}[x \in \mathscr{C}_\infty, R_x \geq n] \leq c \exp\left(-c' n^{c_1} \right).
\end{equation}
This immediately implies the following observation.
\begin{lemma}
\label{lem:ShortMixingTimesCube}
Let $\gamma > 0$. For $\mathbb{Q}$-a.e.~$\mu \in \Omega_0$, there exists $N_0^{\textnormal{reg}}(\mu) < \infty$ such that for every $n \geq N_0^{\textnormal{reg}}(\mu)$ and $x \in \mathscr{C}_\infty \cap B(0,n)$ one has that $R_x < n^\gamma$.
\end{lemma}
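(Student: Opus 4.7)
The plan is to deduce the lemma from the stretched-exponential tail bound $\mathbb{Q}[x \in \mathscr{C}_\infty, R_x \geq n] \leq c\exp(-c' n^{c_1})$ stated just above, via a straightforward union bound plus Borel--Cantelli argument. Concretely, for each $n \in \mathbb{N}$ I would introduce the event
\begin{equation*}
A_n = \{\mu \in \Omega_0 \, : \, \exists x \in \mathscr{C}_\infty \cap B(0,n) \text{ with } R_x(\mu) \geq n^\gamma \},
\end{equation*}
and estimate, using that $\{R_x \geq n^\gamma\}$ is relevant only on $\{x \in \mathscr{C}_\infty\}$ (by the convention $R_x = \infty$ otherwise, one needs the event to be on $\{x\in\mathscr{C}_\infty\}$; this is automatic since the supremum is over $x\in\mathscr{C}_\infty$):
\begin{equation*}
\mathbb{Q}[A_n] \leq \sum_{x \in B(0,n)} \mathbb{Q}[x \in \mathscr{C}_\infty, R_x \geq n^\gamma] \leq C n^d \exp(-c' n^{\gamma c_1}).
\end{equation*}

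Since this upper bound is summable in $n$, Borel--Cantelli gives $\mathbb{Q}[A_n \text{ i.o.}] = 0$, so $\mathbb{Q}$-almost surely only finitely many of the events $A_n$ occur. For any such $\mu$, one can then define $N_0^{\textnormal{reg}}(\mu) = 1 + \sup\{n \in \mathbb{N} \, : \, \mu \in A_n\} < \infty$, and by construction, for all $n \geq N_0^{\textnormal{reg}}(\mu)$ every $x \in \mathscr{C}_\infty \cap B(0,n)$ satisfies $R_x(\mu) < n^\gamma$, which is exactly the claim.

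There is no real obstacle here; the only small thing to keep track of is that the exponent $c_1$ from the tail bound is a fixed positive constant (depending only on $d,p$), so $n^{\gamma c_1}$ is a positive power of $n$ regardless of how small $\gamma$ is taken, which is precisely what makes the union bound $C n^d \exp(-c' n^{\gamma c_1})$ summable and hence the Borel--Cantelli step go through for every $\gamma > 0$.
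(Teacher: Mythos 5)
Your argument is correct and is essentially the same as the paper's: the paper also defines the events $D_n = \bigcup_{x \in B(0,n)}\{x \in \mathscr{C}_\infty, R_x \geq n^\gamma\}$, bounds $\mathbb{Q}[D_n] \leq c n^d \exp(-c' n^{c_1 \gamma})$ by a union bound using the stretched-exponential tail estimate, and concludes via Borel--Cantelli. No differences worth noting.
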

\begin{proof}
Consider the events $D_n = \bigcup_{ x \in B(0,n) } \{x \in \mathscr{C}_\infty, R_x \geq n^\gamma \}$. By a union bound we see that 
\begin{equation}
\sum_{n = 0}^\infty \mathbb{Q}[D_n] \leq \sum_{n= 0}^\infty cn^d \exp\left( - c' n^{c_1\gamma} \right) < \infty,
\end{equation}
so by the Borel-Cantelli lemma, $\left(\bigcup_{ x \in B(0,n) } \{x \in \mathscr{C}_\infty, R_x \geq n^\gamma \} \right)^c = \bigcap_{x \in B(0,n)} (\{x \notin \mathscr{C}_\infty\} \cup \{ R_x < n^\gamma \})$ happens for all $n \geq N_0^{\textnormal{reg}}(\mu)$ for $\mu$ in a set of full $\mathbb{Q}$-measure, and the claim follows. 
\end{proof}

Finally, we also introduce some convenient notation concerning stochastic integrability motivated from the theory of quantitative stochastic homogenization (see in particular~\cite[Appendix A]{armstrong2019quantitative}), which will be helpful for some quantitative ergodic estimates in Section~\ref{s.ProofMainProp}. Given a real random variable $Y$ defined on some probability space $(E, \mathcal{E}, \widetilde{\mathbb{P}})$, and $s,\theta \in (0,\infty)$, we write
\begin{equation}
Y \leq \mathcal{O}_s(\theta) \text{ if and only if } \widetilde{\mathbb{E}}\left[\exp\left(\left(\frac{Y}{\theta} \right)_+^s\right) \right] \leq 2
\end{equation}
(with $\widetilde{\mathbb{E}}$ denoting the expectation under $\widetilde{\mathbb{P}}$). Note that $Y \leq \mathcal{O}_s(\theta)$ implies that for $y \geq 0$, one has 
\begin{equation}
\label{eq:ExpMarkov}
\widetilde{\mathbb{P}}[Y \geq \theta y] \leq 2 \exp\left( -y^s \right).
\end{equation}
 Also note that for any $s > 0$, there exists a constant $c_2(s)$ such that for $\theta_1, \theta_2 \in (0,\infty)$ and random variables $Y_1$ and $Y_2$, one has the implication
\begin{equation}
\label{eq:OsAdditionProperty}
Y_j \leq \mathcal{O}_s(\theta_j), \ j \in \{1,2\} \qquad \Rightarrow \qquad Y_1+Y_2 \leq \mathcal{O}_s\left(c_2(s)(\theta_1+\theta_2) \right),
\end{equation}
see~\cite[Lemma A.4]{armstrong2019quantitative} or~\cite[(24)]{dario2021quantitative}.

\section{Fractional moment method and proof of Theorem~\ref{thm:MainTheorem}}
\label{s.FracMoments}

In this section the proof of the main result, Theorem~\ref{thm:MainTheorem}, is presented. The proof is based on the change of measure technique and a fractional moment calculation which was introduced in~\cite{lacoin2010new} and recently used in a similar form in~\cite{cosco2021directed}. As a main step, we show below that for $\mathbb{Q}_0$-a.e.~$\mu \in \{0 \in \mathscr{C}_\infty \}$, the expectation $\mathbb{E}[W_{n,\mu}^\alpha]$ of the normalized partition function defined in~\eqref{eq:NormPartFct} raised to some power $\alpha \in (0,1)$ converges to zero as $n \rightarrow \infty$. For this argument a pivotal ingredient is to ensure that the random walk visits a ``tube'' of length $\varepsilon \log(n)$ before time $n$ with high probability for $\mathbb{Q}_0$-a.e.~$\mu$, and spends a time of order $(\varepsilon \log(n))^3$ there (with $\varepsilon > 0$ to be determined later). This part of the proof is postponed to Section~\ref{s.ProofMainProp}. \medskip

Let $\{e_1,...,e_d\}$ denote the canonical basis of $\mathbb{R}^d$. A set of edges of the form 
\begin{equation}
T_{x,L} = \Big\{\{x,x+e_1\},\{x+e_1,x+2e_1\},...,\{x+([ L ] -1)e_1,x+[L] e_1\} \Big\}
\end{equation}
 will be called an \textit{open tube} (in direction $e_1$) of length $L \in (0,\infty)$ based at $x \in \mathbb{Z}^d$ if $\mu_{e} = 1$ for all $e \in T_{x,L}$, and $\mu_e = 0$ for all adjacent edges in coordinate directions perpendicular to $e_1$ except for possibly those at $x$, and for $e = \{x+[L]e_1,x+([L]+1)e_1) \}$  (we use the convention that $T_{x,L} = \emptyset$ if $[L] = 0$, and such ``tubes'' of length $L < 1$ are always open).  More precisely, if $V(T_{x,L}) = \{x,x+e_1,...,x + [L]e_1\}$ denotes the vertex set of the tube $T_{x,L}$, the latter condition means that we require that $\mu_e = 0$ for every $e = \{z,z\pm e_j\} \in E(\mathbb{Z}^d)$, for $j \in \{2,...,d\}$ and $z \in V(T_{x,L}) \setminus \{x\}$, as well as $\mu_{\{x+[L]e_1,x+([L]+1)e_1 \}} = 0$. Let us already point out that with this definition, the vertex set of an open tube $T_{x,L}$ can only be visited by a simple random walk through $x$. We also consider the outer (edge) boundary $\partial_{\text{out}}T_{x,L}$ of the tube, which is defined as the set of edges with $\ell^1$-distance $1$ to $V(T_{x,L}) \setminus \{x\}$, formally
\begin{equation}
\partial_{\text{out}}T_{x,L} = \Big\{ e \in E(\mathbb{Z}^d) \, : \, d_{\ell^1}(e, V(T_{x,L}) \setminus \{x\}) = 1 \Big\}.
\end{equation} 
 
  We call an open tube $T_{x,L}$ \textit{good} if all edges in $\partial_{\text{out}}T_{x,L}$ are open.  An illustration is found in Figure~\ref{fig:tube}.

\begin{figure}[htbp]
\begin{center}
\includegraphics[scale=1]{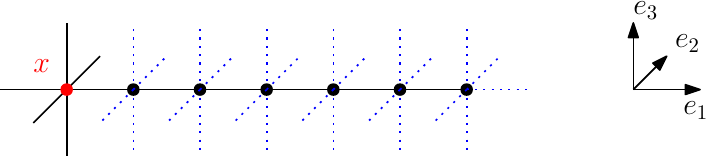}
\end{center}
\caption{An illustration of a situation in $d = 3$ in which $T_{x,L}$ is an open tube based at $x$ with $L = 6$. Here, dashed lines represent closed edges and full lines represent open edges. The tube $T_{x,L}$ is good if all edges adjacent to the dashed lines are open.}
\label{fig:tube}
\end{figure}

 In the following, we consider for $n \in \mathbb{N}$ and $\varepsilon > 0$ the event
  \begin{equation}
  \label{eq:SpendMuchTimeEventDef}
            \begin{minipage}{0.8\linewidth}\begin{center}
               $\mathcal{A}_n = \Big\{(X_k)_{k \in \{0,...,n\}}$ takes at least $[\varepsilon \log(n)]^3$ consecutive steps in an open tube of length $[\varepsilon \log(n)] \Big\}$,\end{center}
            \end{minipage}
        \end{equation}
which is defined for a fixed realization of $\mu \in \{0 \in \mathscr{C}_\infty\}$. More precisely,~\eqref{eq:SpendMuchTimeEventDef} means that there is an $x \in \mathscr{C}_\infty$ such that $T_{x,\varepsilon\log(n)} ( = T_{x,[\varepsilon\log(n)]})$ is an open tube and $j \in \{0,...,n\}$ with $j + [\varepsilon \log(n)]^3 \leq n$ such that $\{X_j,X_{j+1},...,X_{j + [\varepsilon\log(n)]^3} \} \subseteq V(T_{x,\varepsilon \log(n)})$.  A pivotal point in the construction below is that as $n$ tends to infinity, $\mathcal{A}_n$ becomes typical for $\mathbb{Q}_0$-a.e. realization of $\mu$, which we state as a proposition below.

\begin{proposition}
\label{prop:NoTubeAvoidance}
For $\varepsilon > 0$ small enough one has that for $\mathbb{Q}_0$-a.e.~realization $\mu \in \{0 \in \mathscr{C}_\infty \}$, 
\begin{equation}
\label{eq:TubeAvoidanceUnlikely}
\lim_{n \rightarrow \infty} P_{0,\mu}\left[ \mathcal{A}_n \right] = 1.
\end{equation}
\end{proposition}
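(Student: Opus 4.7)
The plan is to combine a concentration estimate showing that many good open tubes exist at diffusive scale in $\mathscr{C}_\infty$ with the quenched Gaussian heat kernel bound~\eqref{eq:HKLB} and the one-dimensional escape estimate of Lemma~\ref{lem:DeviationExitTime}. Set $L = [\varepsilon \log n]$ throughout. First I would tile $B(0,\sqrt{n})$ by disjoint sub-cubes of side $3L$ and, in each sub-cube, pick a deterministic candidate base $x$. Let $G_x$ be the event that $T_{x,L}$ is a good open tube with $x \in \mathscr{C}_\infty$, and set $H(\mu) = \{x : G_x \text{ holds}\}$. Since $G_x$ depends on only $O(L)$ edges — tube edges open, perpendicular edges at interior vertices closed, outer-boundary edges open, together with a local connection of $x$ to the infinite cluster via a nearby edge independent of the tube — one checks that $\mathbb{Q}[G_x] \geq c e^{-c_1 L} = c n^{-c_1\varepsilon}$. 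Because candidates in non-adjacent sub-cubes depend on disjoint edge sets, a Chernoff-type concentration argument (conveniently phrased through the $\mathcal{O}_s(\cdot)$-calculus recalled in~\eqref{eq:OsAdditionProperty}) combined with Borel--Cantelli yields that $|H(\mu)| \geq c n^{d/2 - c_1\varepsilon}/L^d$ for $\mathbb{Q}_0$-a.e.~$\mu$ and all sufficiently large $n$ (depending on $\mu$).

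For the walk-side argument, Lemma~\ref{lem:ShortMixingTimesCube} renders the Gaussian lower bound~\eqref{eq:HKLB} uniform on $B(0,\sqrt{n}) \cap \mathscr{C}_\infty$, so that $\sum_{k \leq n}(P_{0,\mu}[X_k = x] + P_{0,\mu}[X_{k+1} = x]) \geq c n^{-(d-2)/2}$ for every $x \in H(\mu)$ with $|x| \leq \sqrt{n}/2$. Summing over $H(\mu)$, the expected time the walk spends at good tube bases by time $n$ is at least $c |H(\mu)| n^{-(d-2)/2} \geq c n^{1 - c_1 \varepsilon}/L^d$. By the Markov property, each visit is followed by probability $\geq (2d)^{-1}$ of stepping into the tube; once inside, the walk is effectively a symmetric one-dimensional simple random walk on $\{x, x+e_1, \ldots, x+Le_1\}$ reflecting at $x+Le_1$ (all perpendicular edges at interior vertices and the edge beyond $x+Le_1$ being closed), and an application of Lemma~\ref{lem:DeviationExitTime} bounds the probability of not leaving $V(T_{x,L})$ in $L^3$ consecutive steps from below by $\exp(-cL) = n^{-c\varepsilon}$. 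Consequently the expected number of ``success instants''—times $k$ such that $X_k \in H(\mu)$ and $\{X_k, \ldots, X_{k+L^3}\} \subseteq V(T_{X_k, L})$—is at least $c n^{1-(c_1+c)\varepsilon}/L^d$. A standard second-moment / Paley--Zygmund argument, in which pair-correlations $\sum_{k < l} P_{0,\mu}[\text{both } k, l \text{ successful}]$ are controlled by the Markov property at $l$ together with the matching upper heat kernel bound (also from~\cite{barlow2004random}), then upgrades this first-moment estimate to $P_{0,\mu}[\mathcal{A}_n] \to 1$, provided $\varepsilon < 1/(c_1 + c)$.

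The main obstacle is the tube-counting step. Upgrading ``many good tubes exist in expectation'' to an $n$-uniform, $\mathbb{Q}_0$-almost-sure lower bound requires simultaneous control of the short-range dependence between candidate events in neighboring sub-cubes, the conditioning on $\{0 \in \mathscr{C}_\infty\}$, and the requirement that tube bases belong to $\mathscr{C}_\infty$ rather than to a microscopic finite cluster. This is precisely where the quantitative ergodic methods of~\cite[Appendix A]{dario2021quantitative} and the $\mathcal{O}_s(\cdot)$-calculus of Section~\ref{s.notation} come into play; the random-walk side estimates are then relatively direct consequences of the quenched heat kernel bound and the Markov property.
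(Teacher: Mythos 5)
There is a genuine gap at the concluding step. Your first-moment computation (expected number of ``success instants'' $N$ of order $n^{1-(c_1+c)\varepsilon}/L^d$) is fine in spirit, but the claim that a second-moment/Paley--Zygmund argument then yields $P_{0,\mu}[\mathcal{A}_n]\to 1$ does not follow. Paley--Zygmund gives $P_{0,\mu}[N>0]\geq (E[N])^2/E[N^2]$, and in this problem one cannot expect $E[N^2]=(1+o(1))(E[N])^2$: conditionally on a success at time $k$, the walk sits at a tube, and by the Markov property together with the upper heat kernel bound the conditional expected number of \emph{future} successes is of the same order as $E[N]$ itself, with a constant that there is no reason to take close to $1$ (the Gaussian upper and lower bounds in~\cite{barlow2004random} carry different constants, and the local density of tube bases is only controlled up to constants). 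So your argument as written yields $P_{0,\mu}[\mathcal{A}_n]\geq c>0$ uniformly in $n$, not convergence to $1$. The missing ingredient is a restart/iteration: one needs a lower bound of order $n^{-\eta}$ with $\eta<\tfrac12$, \emph{uniform over all starting points} $y\in B(0,n)\cap\mathscr{C}_\infty$, for the walk to enter a tube and stay $[\varepsilon\log n]^3$ steps within a time window of length $[\sqrt n]$ (this is Lemma~\ref{lem:QuenchedEstimate} in the paper, proved by sending the walk via~\eqref{eq:HKLB} to one of the many tube bases in $B(y,n^{1/4})$, walking to the middle of the tube, and applying Lemma~\ref{lem:DeviationExitTime}); one then applies the simple Markov property at the times $j[\sqrt n]$, $j\leq[\tfrac12\sqrt n]$, to get $P_{0,\mu}[\mathcal{A}_n^c]\leq(1-cn^{-\eta})^{[\sqrt n/2]}\to 0$. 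Without this uniformity-plus-iteration device (or a genuinely sharp second-moment estimate, which you do not indicate how to obtain), the limit $1$ is not reached.

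A secondary issue is the tube-counting step. Your Chernoff argument ``candidates in non-adjacent sub-cubes depend on disjoint edge sets'' is not valid as stated, because $G_x$ contains the event $\{x\in\mathscr{C}_\infty\}$, which is global and correlates all candidates; there is no ``local connection to the infinite cluster''. You do acknowledge this in your closing paragraph and point to~\cite[Appendix A]{dario2021quantitative}, which is indeed how the paper proceeds: the FKG inequality gives the density lower bound $\theta'(n,p,\varepsilon)\geq\theta(p)n^{-c_3(p)\varepsilon}$, and concentration is obtained not by independence but by the exponential Efron--Stein inequality, with the resampling sensitivity controlled through $|\mathscr{C}_\infty^e\triangle\mathscr{C}_\infty|$ as in~\eqref{eq:Stochint1}; a Borel--Cantelli argument then makes the count uniform over mesoscopic boxes $B_o(z,n^{1/4})$, $B_e(z,n^{1/4})$ (the parity split matters for the hitting estimate). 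So on this point your plan identifies the right tool but leaves the actual argument to the reference, while the step you present in its place would fail. Finally, a minor imprecision: Lemma~\ref{lem:DeviationExitTime} applies to a walk started near the middle of the interval, so after stepping into the tube you must first pay an additional $2^{-cL}=n^{-c\varepsilon}$ to reach the middle before invoking it; this does not change the order of your estimate.
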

The proof of Proposition~\ref{prop:NoTubeAvoidance} is the most technical aspect of this article, and we postpone it to Section~\ref{s.ProofMainProp}. We will now show how Theorem~\ref{thm:MainTheorem} can be obtained given the validity of Proposition~\ref{prop:NoTubeAvoidance}.

\begin{proof}[Proof of Theorem~\ref{thm:MainTheorem}] 
Fix a realization $\mu \in \{0 \in \mathscr{C}_\infty\}$ for which~\eqref{eq:TubeAvoidanceUnlikely} holds. Note that upon using~\eqref{eq:NormPartFctRewritten}, one can write
\begin{equation}
W_{n,\mu} = E_{0,\mu}\left[\exp\left(\sum_{i = 1}^n ( \beta \omega(i,X_i) - \Lambda(\beta)) \right) \right].
\end{equation}
We then choose $\alpha \in (0,1)$ and find that
\begin{equation}
\label{eq:MainProofStartingPoint}
\begin{split}
\mathbb{E}\left[W_{n,\mu}^\alpha \right] & \leq  \mathbb{E}\left[ E_{0,\mu}\left[\exp\left(\sum_{i = 1}^n ( \beta \omega(i,X_i) - \Lambda(\beta)) \right) \mathbbm{1}_{\mathcal{A}_n} \right]^\alpha \right] \\
& + \mathbb{E}\left[ E_{0,\mu}\left[\exp\left(\sum_{i = 1}^n ( \beta \omega(i,X_i) - \Lambda(\beta)) \right) \mathbbm{1}_{\mathcal{A}_n^c} \right]^\alpha \right],
\end{split}
\end{equation}
where we used that $(a+b)^\alpha \leq a^\alpha + b^\alpha$ for every $a,b > 0$. For the second term in the display above, one can use Jensen's inequality for the concave function $x \mapsto x^\alpha$ on $[0,\infty)$, which yields together with the definition~\eqref{eq:LambdaDef} of $\Lambda$ that
\begin{equation}
\label{eq:FirstTermSmall}
\mathbb{E}\left[ E_{0,\mu}\left[\exp\left(\sum_{i = 1}^n ( \beta \omega(i,X_i) - \Lambda(\beta)) \right) \mathbbm{1}_{\mathcal{A}_n^c} \right]^\alpha \right] \leq P_{0,\mu}[\mathcal{A}_n^c]^\alpha \stackrel{\eqref{eq:TubeAvoidanceUnlikely}}{=} o_n(1),
\end{equation}
as $n \rightarrow \infty$. Let 
\begin{equation}
\mathcal{T} = \Big\{x \in \mathscr{C}_\infty \, : \, \text{$T_{x,\varepsilon \log(n)}$ is an open tube} \Big\}
\end{equation}
stand for the set of points in the infinite cluster that have an open tube of length $\varepsilon \log(n)$ in direction $e_1$ attached to them. We use the decomposition
\begin{equation}
\begin{split}
\mathcal{A}_n & \subseteq \bigcup_{j = 0}^{n - [\varepsilon \log(n)]^3 } \bigcup_{x \in B(0,n) \cap \mathcal{T}} \mathcal{A}_{n,j,x}, \ \text{where } \\
\mathcal{A}_{n,j,x} & = \{ \{ X_k \, : k \in \{j,...,j+[\varepsilon \log(n)]^3  \} \} \subseteq V(T_{x,\varepsilon \log(n)}), X_j = x  \}, \ 0 \leq j \leq n, x \in B(0,n) \cap \mathcal{T},
\end{split}
\end{equation}
Indeed, by definition of $\mathcal{A}_n$, we can find some $0 \leq j \leq n - [\varepsilon \log(n)]^3$ and an open tube $T_{x,\varepsilon \log(n)}$ with $x \in \mathscr{C}_\infty$ such that the walk $(X_k)_{k \geq 0}$ takes $[\varepsilon \log(n)]^3$ steps in the tube after entering it at time $j$. Returning to~\eqref{eq:MainProofStartingPoint}, we find that 
\begin{equation}
\label{eq:ProofIntermediateStep}
\begin{split}
\mathbb{E}\left[W_{n,\mu}^\alpha \right] &  \stackrel{\eqref{eq:FirstTermSmall}}{\leq} \sum_{j = 0}^{n- [\varepsilon \log(n)]^3} \sum_{x \in B(0,n) \cap \mathcal{T}} \mathbb{E}\left[ E_{0,\mu}\left[\exp\left(\sum_{i = 1}^{n} ( \beta \omega(i,X_i) - \Lambda(\beta)) \right) \mathbbm{1}_{\mathcal{A}_{n,j,x}} \right]^\alpha \right] \\
& +o_n(1).
\end{split}
\end{equation}
Note that in the above sum, we only retain those contributions in which $T_{x,\varepsilon\log(n)}$ is an open tube. For such $x \in B(0,n)$, and $j \in \{0,...,n - [\varepsilon \log(n)]^3\}$ we introduce the set 
\begin{equation}
\mathcal{C}_{n,j,x} = \{j,j+1,...,j+[\varepsilon \log(n)]^3 \} \times V(T_{x,\varepsilon \log(n)}) (\subseteq \mathbb{N}_0 \times \mathbb{Z}^d),
\end{equation}
and define 
\begin{equation}
\label{eq:ProofIntermediateStepSmall}
W_{n,\mu,j,x} = E_{0,\mu}\left[\exp\left(\sum_{i = 1}^n ( \beta \omega(i,X_i) - \Lambda(\beta)) \right) \mathbbm{1}_{\mathcal{A}_{n,j,x}} \right].
\end{equation}
Clearly, one has for large enough $n$ 
\begin{equation}
\label{eq:HowManyElementsinC}
|\mathcal{C}_{n,j,x}| \leq C(\varepsilon \log(n))^4.
\end{equation} 
We will now use the aforementioned change of measure argument to bound the terms~\eqref{eq:ProofIntermediateStepSmall} contributing to~\eqref{eq:ProofIntermediateStep}. To this end, we define for $n \in \mathbb{N}$ the quantity
\begin{equation}
\label{eq:delta_n_def}
\delta_n = \frac{1}{(\log(n))^{\frac{7}{4}}\vee 1},
\end{equation}
and consider (similarly as in~\cite[Section 3]{lacoin2010new}) the tilted measures
\begin{equation}
\frac{\mathrm{d}\widetilde{\mathbb{P}}_{n,j,x} }{\mathrm{d}\mathbb{P}} = \prod_{(i,y) \in \mathcal{C}_{n,j,x}} \exp\Big(-\delta_n \omega(i,y) - \Lambda(-\delta_n)\Big).
\end{equation}
Under $\widetilde{\mathbb{P}}_{n,j,x}$, the random variables $\{\omega(i,y) \, : \, i \in \mathbb{N}_0, y \in \mathbb{Z}^d \}$ are independent with mean $-\delta_n \mathbbm{1}_{(i,y) \in \mathcal{C}_{n,j,x}}(1 + o_n(1))$ and variance $1+o_n(1)$ as $n \rightarrow \infty$, using~\eqref{eq:UsefulExpressionLambda}. We can now conclude the proof similarly as in~\cite{cosco2021directed}. By H\"older's inequality, we have
\begin{equation}
\label{eq:Holder}
\mathbb{E}[W_{n,\mu,j,x}^\alpha] = \widetilde{\mathbb{E}}_{n,j,x}\left[\frac{\mathrm{d} \mathbb{P} }{\mathrm{d} \widetilde{\mathbb{P}}_{n,j,x} } W_{n,\mu,j,x}^\alpha  \right] \leq \widetilde{\mathbb{E}}_{n,j,x}\left[\left(\frac{\mathrm{d} \mathbb{P} }{\mathrm{d} \widetilde{\mathbb{P}}_{n,j,x} } \right)^{\frac{1}{1-\alpha}} \right]^{1-\alpha} \widetilde{\mathbb{E}}_{n,j,x}\left[W_{n,\mu,j,x} \right]^\alpha,
\end{equation}
where we have denoted the expectation under $\widetilde{\mathbb{P}}_{n,j,x}$ by $\widetilde{\mathbb{E}}_{n,j,x}$. The first expression is bounded as follows for large enough $n$:
\begin{equation}
\begin{split}
\label{eq:FirstExpSmall}
\mathbb{E}\left[\left(\frac{\mathrm{d} \mathbb{P} }{\mathrm{d} \widetilde{\mathbb{P}}_{n,j,x} } \right)^{\frac{\alpha}{1-\alpha}} \right]^{1-\alpha} & \leq \exp\left( \frac{|\mathcal{C}_{n,j,x}|}{2 } \frac{\alpha \delta_n^2}{1-\alpha}(1+o_n(1))  \right) \\
& \stackrel{\eqref{eq:HowManyElementsinC}, \eqref{eq:delta_n_def}}{ \leq} \exp\left( C' \varepsilon^4 (\log(n))^{\frac{1}{2}} \frac{\alpha}{1-\alpha}(1+o_n(1)) \right).
\end{split}
\end{equation}
Moreover, we have for $(i,y) \in \mathcal{C}_{n,j,x}$ that 
\begin{equation}
\begin{split}
\exp(-\Lambda(\beta)) \cdot \widetilde{\mathbb{E}}_{n,j,x}\left[\exp(\beta \omega(i,y))\right] & = \exp\left(\Lambda(\beta-\delta_n) - \Lambda(\beta) - \Lambda(-\delta_n) \right) \\
& = \exp\left(-\Lambda'(\beta)\delta_n ( 1 + o_n(1))\right),
\end{split}
\end{equation}
as $n \rightarrow \infty$, therefore
\begin{equation}
\label{eq:SecondExpSmall}
\begin{split}
\widetilde{\mathbb{E}}_{n,j,x}\left[ W_{n,\mu,j,x} \right] & = E_{0,\mu}\left[\exp\left(-\Lambda'(\beta)\delta_n(1+o_n(1)) |\{k \, : \, (k,X_k) \in \mathcal{C}_{n,j,x} \} | \right) \mathbbm{1}_{\mathcal{A}_{n,j,x}} \right] \\
& \leq \exp\left(-C \Lambda'(\beta) \delta_n (1+o_n(1))(\varepsilon \log(n))^3 \right) \\
& \stackrel{\eqref{eq:delta_n_def}}{=} \exp\left(-C \Lambda'(\beta)  (1+o_n(1))\varepsilon^3 (\log(n))^{\frac{5}{4}} \right).
\end{split}
\end{equation}
We can now combine~\eqref{eq:ProofIntermediateStep},~\eqref{eq:Holder},~\eqref{eq:FirstExpSmall}, and~\eqref{eq:SecondExpSmall}, which yields
\begin{equation}
\begin{split}
\mathbb{E}\left[W_{n,\mu}^\alpha \right] &  \leq Cn^{d+1} \exp\left(C'\varepsilon^4(\log(n))^{\frac{1}{2}}\frac{\alpha(1+o_n(1))}{1-\alpha}-C \alpha \Lambda'(\beta)  (1+o_n(1))\varepsilon^3 (\log(n))^{\frac{5}{4}}  \right) \\
& \rightarrow 0, \qquad \text{ as } n \rightarrow \infty,
\end{split}
\end{equation}
having also used that by our assumptions in~\eqref{eq:AssumptionsExpVar} and below~\eqref{eq:LambdaDef}, $\Lambda'(\beta) > 0$ for all $\beta > 0$. Therefore, and since $(W^\alpha_{n,\mu})_n$ is uniformly integrable, we see that necessarily $W_{\infty,\mu} = 0$ $\mathbb{P}$-a.s., which implies $\beta_c = 0$.
\end{proof}

\section{Proof of Proposition~\ref{prop:NoTubeAvoidance}}
\label{s.ProofMainProp}

In this section we present the proof of Proposition~\ref{prop:NoTubeAvoidance}. To this end, we will first argue that for typical realizations of the percolation configuration, the infinite cluster $\mathscr{C}_\infty$ contains enough open tubes of logarithmic length (see Lemma~\ref{lem:ManyTubes} below) in every mesoscopic box of radius $n^{\frac{1}{4}}$ centered at $x \in B(0,n)$, provided that $n$ is large enough (due to the bipartite nature of $\mathbb{Z}^d$, we need to argue that this remains true for the parts of all mesoscopic boxes consisting of all points with  odd resp.~even parity). We then show that as $n$ tends to infinity, a random walk starting in some point $y \in B(0,n) \cap \mathscr{C}_\infty$ has a good chance to enter one of these tubes and to spend at least $[\varepsilon \log(n)]^3$ time steps there within its first $[\sqrt{n}]$ steps (see Lemma~\ref{lem:QuenchedEstimate} below), from which the claim in Proposition~\ref{prop:NoTubeAvoidance} is obtained. 
For the next statement, recall the definition of (good) open tubes in the beginning of Section~\ref{s.FracMoments}.

\begin{lemma}
\label{lem:ManyTubes}
Let $\delta > 0$. There exists $\varepsilon_0 = \varepsilon_0(p,\delta) > 0$ and for $\mathbb{Q}_0$-a.e.~$\mu \in \{0 \in \mathscr{C}_\infty \}$, there exists $N_{\textnormal{tube}} = N_{\textnormal{tube}}(\mu,\delta) < \infty$, such that for all $n \geq N_{\textnormal{tube}}$, one has
\begin{equation}
\label{eq:ManyTubes}
\begin{split}
&\textit{For every $z \in B(0,n)$, the sets $B_o(z,n^{\frac{1}{4}})$ and $B_e(z,n^{\frac{1}{4}})$ each contain at least} \\
& \textit{$c(p) n^{\frac{d-\delta}{4}}$ points $x \in \mathscr{C}_\infty$ for which $T_{x,\varepsilon \log(n)}$ is a good open tube},
 \end{split}
\end{equation}
whenever $\varepsilon \in (0,\frac{1}{4} \varepsilon_0)$.
\end{lemma}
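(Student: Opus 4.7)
The plan is to first establish a pointwise lower bound on the probability that $x \in \mathscr{C}_\infty$ is the base of a good open tube, then prove a quantitative concentration estimate for the count of such tubes in a mesoscopic box, and finally conclude via a union bound and Borel-Cantelli. Throughout, write $L = L_n = [\varepsilon \log n]$.

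For the one-point estimate, observe that the event $\{T_{x,L}$ is a good open tube$\}$ is determined by $O(L)$ edges in a bounded-radius neighborhood of the segment $[x, x+Le_1]$, each required to be open or closed with fixed bias. Its $\mathbb{Q}$-probability is therefore at least $c(p,d)^L \geq n^{-c_1(p,d)\varepsilon}$. Conditionally on this event, the edges $\{x, x \pm e_j\}$ for $j \in \{2,\ldots,d\}$ are open (they belong to $\partial_{\mathrm{out}} T_{x,L}$), while all edges at $\ell^1$-distance at least $2$ from the tube are unconstrained and remain i.i.d.~Bernoulli$(p)$ independent of the tube-configuration. A standard percolation argument (positivity of the percolation density in a half-space for $d \geq 3$) then gives $\mathbb{Q}[x \in \mathscr{C}_\infty \mid T_{x,L} \text{ good}] \geq c_2(p) > 0$ uniformly in $L$. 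Hence $q_n := \mathbb{Q}[x \in \mathscr{C}_\infty, \ T_{x,L} \text{ good}] \geq c_3(p) n^{-c_1(p,d) \varepsilon}$, uniformly in $x$. Choosing $\varepsilon_0 = \varepsilon_0(p,\delta)$ small enough that $c_1(p,d) \varepsilon_0 < \delta/2$, every $\varepsilon \in (0, \varepsilon_0/4)$ satisfies $q_n \geq c_3(p) n^{-\delta/8}$.

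For $z \in B(0,n)$, set
\begin{equation*}
N_o(z) := \sum_{x \in B_o(z, n^{1/4})} \mathbbm{1}_{\{x \in \mathscr{C}_\infty\}}\,\mathbbm{1}_{\{T_{x,L} \text{ good}\}},
\end{equation*}
and define $N_e(z)$ analogously. The previous step gives $\mathbb{E}[N_o(z)] \geq c(p)\, n^{d/4 - \delta/8}$, which exceeds the target $c(p) n^{(d-\delta)/4}$ by a polynomial factor. The technical core is the stretched-exponential concentration bound
\begin{equation*}
\mathbb{Q}\!\left[N_o(z) \leq \tfrac{1}{2}\,\mathbb{E}[N_o(z)]\right] \leq C \exp(-n^\eta), \qquad \text{some } \eta = \eta(p,d,\delta) > 0,
\end{equation*}
uniformly in $z$. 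To obtain this, one replaces the non-local indicator $\mathbbm{1}_{\{x \in \mathscr{C}_\infty\}}$ with its local proxy $\mathbbm{1}_{\{x \leftrightarrow \partial B(x, n^\kappa)\}}$ for a small $\kappa > 0$, incurring an error that, by the quantitative uniqueness of the infinite cluster developed in~\cite[Appendix A]{dario2021quantitative}, is $\mathcal{O}_s(\exp(-c n^{\kappa c'}))$ in the sense of Section~\ref{s.notation}. After truncation, the indicators are measurable with respect to edges in balls of radius $n^\kappa$ and hence are independent whenever their base-points lie in disjoint sub-boxes of side slightly larger than $n^\kappa$; summing over a sparse sub-grid of $B_o(z, n^{1/4})$ and applying a Bernstein-type inequality (or, equivalently, the subadditivity~\eqref{eq:OsAdditionProperty} of the $\mathcal{O}_s$-class) then yields the displayed bound.

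Finally, since $|B(0,n)| \leq C n^d$, a union bound over $z$ and parities gives
\begin{equation*}
\mathbb{Q}\!\left[\,\exists z \in B(0,n):\ \min(N_o(z), N_e(z)) < c(p) n^{(d-\delta)/4}\,\right] \leq C n^d \exp(-n^\eta),
\end{equation*}
which is summable in $n$, and Borel-Cantelli furnishes the required $N_{\mathrm{tube}}(\mu,\delta) < \infty$ for $\mathbb{Q}$-a.e.~$\mu$, hence also $\mathbb{Q}_0$-a.s. The main obstacle is the concentration estimate: while $\{T_{x,L} \text{ good}\}$ is local, $\{x \in \mathscr{C}_\infty\}$ is a global event whose long-range correlations must be controlled uniformly over the polynomially many centers $z$, and this is precisely what the quantitative localization tools of~\cite[Appendix A]{dario2021quantitative} are designed to provide.
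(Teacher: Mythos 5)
Your proposal is correct in outline but handles the key concentration step by a genuinely different route than the paper. The paper keeps the global indicator $\mathbbm{1}_{\{x\in\mathscr{C}_\infty\}}$ and controls the centered density $\overline{\mathcal{Z}}_n$ by the resampling/vertical-derivative method: the influence of a single edge is bounded by $|(\mathscr{C}_\infty^e\triangle\mathscr{C}_\infty)\cap B_o(0,n)|$ plus a local tube term, the sum of squared influences is estimated via~\cite[Lemma A.1]{dario2021quantitative}, and the exponential Efron--Stein inequality of~\cite{armstrong2017optimal} (together with~\cite[Lemma 3.1]{gu2019efficient}) converts this into $|\overline{\mathcal{Z}}_n|\leq\mathcal{O}_{c_6}(Cn^{-(d-\delta)/2})$; the one-point lower bound is obtained by an FKG argument rather than your conditional half-space estimate (which is also fine, since no forced-closed edge lies in the relevant half-space, so the configuration there dominates i.i.d.\ Bernoulli$(p)$ and half-space percolation applies). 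Your alternative --- replace $\{x\in\mathscr{C}_\infty\}$ by the local proxy $\{x\leftrightarrow\partial B(x,n^\kappa)\}$, note the proxy indicators are independent along a sparse sub-grid of spacing of order $n^\kappa$, and apply a Chernoff/Bernstein lower-tail bound before a union bound over $z$ and Borel--Cantelli --- is more elementary and avoids the Dario--Gu machinery altogether, at the cost of some parameter bookkeeping: you need $\kappa$ small (roughly $\kappa d+c_1\varepsilon\leq\delta/4$, or a union bound over the $\sim n^{\kappa d}$ residue classes) so that a single class still produces $c(p)n^{(d-\delta)/4}$ points, and the truncation error is controlled not by ``quantitative uniqueness'' from~\cite[Appendix A]{dario2021quantitative} but by the classical exponential tail for the radius of finite clusters in supercritical percolation (see~\cite{grimmett1999percolation}), which after a union bound over $B(0,2n)$ makes proxy and true counts agree almost surely for large $n$. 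One small correction: the parenthetical claim that the Bernstein step is ``equivalently'' the subadditivity~\eqref{eq:OsAdditionProperty} is not accurate --- that property only combines two variables with a constant loss and carries no independence gain; you need a genuine large-deviation bound for sums of independent indicators (or the corresponding $\mathcal{O}_s$-lemma for independent sums in~\cite{armstrong2019quantitative}). With these minor repairs your argument closes, and it trades the paper's stretched-exponential concentration with explicit exponent $c_6$ for a softer but simpler independence argument that is entirely sufficient for the lemma.
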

Before we present the proof of Lemma~\ref{lem:ManyTubes}, we briefly give a heuristic outline of the main idea. 
Note that by the spatial ergodic theorem (see, e.g.,~\cite[p.~205]{krengel1985ergodic}), we have that
\begin{equation}
\label{eq:SpatialErgodic}
\lim_{n \rightarrow \infty} \frac{1}{|B(0,n)|} \sum_{x \in B(0,n)} \mathbbm{1}_{\{x \in \mathscr{C}_\infty \}} = \theta(p) (> 0), \qquad \mathbb{Q}\text{-a.s.},
\end{equation}
demonstrating that the infinite cluster has a non-vanishing density in large boxes. Moreover, the probability that at some point $x \in \mathbb{Z}^d$, $T_{x,\varepsilon \log(n)}$ is a good open tube is given by $\frac{1}{n^{c(p) \varepsilon}}$, meaning that in essence, we can hope for roughly $\theta(p) \cdot n^{d/4 - c(p)\varepsilon}$ many open tubes in a box of radius $n^{\frac{1}{4}}$. For our purposes, we essentially need some quantitative rate of convergence in~\eqref{eq:SpatialErgodic} and also include the existence of (good) open tubes of logarithmic size in our analysis. To that end, we will adapt the proof of the quantitative convergence provided in~\cite[Appendix A]{dario2021quantitative}, which relies on the exponential Efron-Stein inequality, see~\cite[Proposition 2.2]{armstrong2017optimal}.

\begin{proof}[Proof of Lemma~\ref{lem:ManyTubes}]
Let $n \in \mathbb{N}$, $\varepsilon > 0$ and consider the expression
\begin{equation}
\label{eq:ZDef}
\mathcal{Z}_n(\varepsilon) = \frac{1}{|B_o(0,n)|} \sum_{x \in B_o(0,n)} \mathbbm{1}_{\{x \in \mathscr{C}_\infty, \text{$T_{x,\varepsilon \log(n)}$ is a good open tube}\}}.
\end{equation}
We will be interested in deviations of $\mathcal{Z}_n(\varepsilon)$. Let us remark that analogous considerations as below work in the same way for $\mathcal{Z}_n'(\varepsilon) = \frac{1}{|B_e(0,n)|} \sum_{x \in B_e(0,n)} \mathbbm{1}_{\{x \in \mathscr{C}_\infty, \text{$T_{x,\varepsilon \log(n)}$ is a good open tube}\}}$, and we will focus on~\eqref{eq:ZDef} in what follows. We also define
\begin{equation}
\theta'(n,p,\varepsilon) = \mathbb{Q}\left[x \in \mathscr{C}_\infty, \text{$T_{x,\varepsilon \log(n)}$ is a good open tube} \right],
\end{equation}
which does not depend on $x \in \mathbb{Z}^d$ due to translation invariance. Moreover, note that $\{\text{$T_{x,\varepsilon \log(n)}$ is a good open tube}\}$ is the intersection of the independent events 
\begin{equation}
\begin{split}
A_{1,x} & = \{\text{$T_{x,\varepsilon \log(n)}$ is an open tube}\}, \\
A_{2,x} & = \{\mu_e = 1 \, : \, e \in \partial_{\text{out}}T_{x,\varepsilon \log(n) } \} \\
& ( = \{\text{all edges at $\ell^1$-distance $1$ to $V(T_{x,\varepsilon \log(n)})\setminus \{x\}$ are open} \}).
\end{split}
\end{equation}
Note that the definition of $A_{2,x}$ excludes the edge $\{x,x+e_1\}$, which is open if $A_{1,x}$ occurs (this choice ensures independence given $A_{2,x}$, since the events $A_{1,x}$ and $A_{2,x}$ depend on distinct edge sets). Similarly as in the proof of~\cite[Lemma 5.6]{cosco2021directed}, we note that $A_{2,x}$ and $\{x \in \mathscr{C}_\infty \}$ are increasing events, and moreover on $A_{2,x}$, the occurrence of $\{x \in \mathscr{C}_\infty\}$ does neither depend on the edges constituting the tube $T_{x,\varepsilon \log(n)}$, nor on the adjacent edges perpendicular to it, nor on $\{x+ [\varepsilon \log(n)]e_1,x+([\varepsilon \log(n)]+1)e_1 \}$. The second observation implies that $\{x \in \mathcal{C}_\infty\}$ and $A_{1,x}$ are conditionally independent given $A_{2,x}$, since on the probability space $(A_{2,x}, \mathcal{A}\vert_{A_{2,x}},\mathbb{Q}[\, \cdot \, |A_{2,x}])$ (with $\mathcal{A}\vert_{A_{2,x}} = \{B \cap A_{2,x} \, : \, B \in \mathcal{A} \}$), the events $A_{2,x} \cap \{x \in \mathcal{C}_\infty\}$ and $A_{2,x} \cap A_{1,x}$ depend on the values of $\mu_e$ on disjoint sets of edges, so
\begin{equation}
\label{eq:CondIndep}
\mathbb{Q}[A_{1,x} \cap \{x \in \mathcal{C}_\infty\} | A_{2,x}] = \mathbb{Q}[A_{1,x}| A_{2,x}] \cdot \mathbb{Q}[\{x \in \mathcal{C}_\infty\}| A_{2,x}].
\end{equation}
Using this together with the FKG inequality for increasing events (see~\cite{grimmett1999percolation}), we find that
\begin{equation}
\label{eq:TubeDensityLowerBound}
\begin{split}
\theta'(n,p,\varepsilon) & = \mathbb{Q}[A_{1,x} | \{ x \in \mathscr{C}_\infty\} \cap A_{2,x}] \mathbb{Q}[\{ x \in \mathscr{C}_\infty\} \cap A_{2,x}] \\
& \geq\mathbb{Q}[A_{1,x} | \{ x \in \mathscr{C}_\infty\} \cap A_{2,x}] \mathbb{Q}[x \in \mathscr{C}_\infty] \mathbb{Q}[A_{2,x}] \\
& = \mathbb{Q}[A_{1,x} | A_{2,x}] \mathbb{Q}[x \in \mathscr{C}_\infty] \mathbb{Q}[A_{2,x}] \\
& \geq \theta(p) \cdot p^{ c [\varepsilon \log(n)] } (1-p)^{c' [\varepsilon \log(n)]} \geq  \frac{\theta(p)}{n^{c_3(p) \varepsilon}},
\end{split}
\end{equation}
where we used in the third line above that~\eqref{eq:CondIndep} implies that $\mathbb{Q}[A_{1,x} | \{x \in \mathcal{C}_\infty\} \cap A_{2,x}] = \mathbb{Q}[A_{1,x} |A_{2,x}]$, and in the penultimate step the fact that $A_{1,x}$ is independent of $A_{2,x}$, and for both events we need to force at most $C[\varepsilon \log(n)]$ edges to be open or closed (note in passing that we could have similarly established the independence of $\{x \in \mathcal{C}_\infty \} \cap A_{2,x}$ and $A_{1,x}$ directly by arguing as above~\eqref{eq:CondIndep}). \medskip

Next, we consider the centered version of $\mathcal{Z}_n(\varepsilon)$ in~\eqref{eq:ZDef}, given by 
\begin{equation}
\overline{\mathcal{Z}}_n(\varepsilon) = \mathcal{Z}_n(\varepsilon) - \theta'(n,p,\varepsilon).
\end{equation}
Note that due to translation invariance, we have that $\mathbb{E}[\overline{\mathcal{Z}}_n(\varepsilon)] = 0$. We will prove that for $\varepsilon_0 = \varepsilon_0(p,\delta) \stackrel{\mathrm{def}}{=} \frac{\delta}{4c_3(p)}$, one has for every $n \in \mathbb{N}$ that
\begin{equation}
\label{eq:MainBoundTubeExistence}
\mathbb{Q}\left[|\overline{\mathcal{Z}}_n(\varepsilon)| \geq \frac{\theta(p)}{2n^{c_3(p)\varepsilon}}  \right] \leq C \exp\left(- c(p,\delta) n^{c_4(p,\delta)} \right),
\end{equation}
whenever $\varepsilon \in (0,\varepsilon_0)$. Let us first prove that~\eqref{eq:MainBoundTubeExistence} implies the statement of the lemma. To that end, consider random variables for $z \in \mathbb{Z}^d$
\begin{equation}
\begin{split}
N_z(\mu)  = \inf\{ & n_0 \in \mathbb{N} \, : \, \text{$B_o(z,n)$ contains at least $c_5(p) n^{d-\delta}$ points with $x \in \mathscr{C}_\infty$}  \\
& \text{such that $T_{x,4\varepsilon \log(n)}$ is a good open tube for all $n \geq n_0$} \}
\end{split}
\end{equation}
(with the usual convention $\inf \emptyset = \infty$), where we set $c_5(p) = \frac{1}{2}\theta(p)$. By translation invariance, the laws of $(N_z)_{z \in \mathbb{Z}^d_o}$ are identical, and the laws of $(N_z)_{z \in \mathbb{Z}^d_e}$ are identical as well. Now note that by~\eqref{eq:MainBoundTubeExistence} and the Borel-Cantelli lemma, we have for $\mathbb{Q}$-a.e.~$\mu$, and every $n$ large enough (depending on $\mu$) and $4\varepsilon < \varepsilon_0$ the inequality 
\begin{equation}
\label{eq:CloseToAverage}
|\overline{\mathcal{Z}}_n(4\varepsilon)| = |\mathcal{Z}_n(4\varepsilon) - \theta'(n,p,4\varepsilon)| \leq \frac{\theta(p)}{2n^{4c_3(p)\varepsilon}},
\end{equation}
and the latter implies that 
\begin{equation}
\label{eq:ManyGoodTubesInCube}
\begin{split}
\sum_{x \in B_o(0,n)} \mathbbm{1}_{\{x \in \mathscr{C}_\infty, \text{$T_{x,4\varepsilon \log(n)}$ is a good open tube}\}} & \geq |B_o(0,n)| \left(\theta'(n,p,4\varepsilon) - \frac{\theta(p)}{2n^{4c_3(p)\varepsilon}}\right) \\ 
&\stackrel{\eqref{eq:TubeDensityLowerBound}}{\geq} |B_o(0,n)| \frac{\theta(p)}{2n^{4c_3(p)\varepsilon}}.
\end{split} 
\end{equation}
In particular if $4\varepsilon < \varepsilon_0 (= \frac{\delta}{4c_3(p)})$, we see that $\mathbb{Q}$-a.s., $N_0(\mu) < \infty$, and we have the control
\begin{equation}
\begin{split}
\mathbb{Q}[N_0 > n_0] & =  \mathbb{Q}\bigg[\bigcup_{n = n_0}^\infty \Big\{\text{$B_o(0,n)$ contains less than $c_5(p)n^{d-\delta}$ points $x \in \mathscr{C}_\infty$} \\
& \qquad \text{such that $T_{x,4\varepsilon \log(n)}$ is a good open tube} \Big\} \bigg]  \\
& \leq \sum_{n = n_0}^\infty \mathbb{Q}\left[|\overline{\mathcal{Z}}_{n}(4\varepsilon)| \geq \frac{\theta(p)}{2n^{4c_3(p)\varepsilon}} \right]  \\
& \stackrel{\eqref{eq:MainBoundTubeExistence}}{\leq} C \sum_{n = n_0}^\infty\exp\left(-c(p,\delta) n^{c_4(p,\delta)} \right) \\
& \leq C'(p,\delta) \exp\left(-c'(p,\delta) n_0^{c_4(p,\delta)}\right), \qquad n_0 \in \mathbb{N},
\end{split}
\end{equation}
where a union bound was used in the second line together with the fact that~\eqref{eq:CloseToAverage} implies~\eqref{eq:ManyGoodTubesInCube}. The same bound also holds for $\mathbb{Q}[N_z > n_0]$ for any $z \in \mathbb{Z}^d$ (upon redefining the constants if necessary). Consider now the event $\bigcup_{z \in B(0,n)} \{N_z > \frac{1}{100} n^{\frac{1}{4}} \}$. Since
\begin{equation}
\sum_{n = 1}^\infty \mathbb{Q}\left[ \bigcup_{z \in B(0,n)} \{N_z > \frac{1}{100} n^{\frac{1}{4}} \}\right] \leq \sum_{n = 1}^\infty Cn^d \exp\left(-c n^{\frac{c_4}{4}} \right) < \infty,
\end{equation}
we infer (again by the Borel-Cantelli lemma) that for some $N_{\text{tube},o}(\mu) (< \infty\text{, $\mathbb{Q}$-a.s.})$, one has that $N_z \leq \frac{1}{100} n^{\frac{1}{4}}$ for every $z \in B(0,n)$ whenever $n \geq N_{\text{tube},o}$, which implies that every $B_o(z,k)$ contains at least $c_5(p)k^{d-\delta}$ points $x \in \mathscr{C}_\infty$ such that $T_{x,4\varepsilon \log(k)}$ is a good open tube whenever $k > \frac{1}{100} n^{\frac{1}{4}}$. In particular, every $B_o(z,n^{\frac{1}{4}})$ contains at least $c(p)n^{\frac{d-\delta}{4} }$ many points $x \in \mathscr{C}_\infty$ such that $T_{x,  \varepsilon \log(n)}$ is a good open tube. Since this argument can be repeated for $B_o(\cdot,\cdot)$ replaced by $B_e(\cdot,\cdot)$, we obtain $N_{\text{tube},e}(\mu) (< \infty\text{, $\mathbb{Q}$-a.s.})$, and so the claim follows for $n \geq N_{\text{tube}}(\mu) \stackrel{\text{def}}{=} N_{\text{tube},o}(\mu) \vee N_{\text{tube},e}(\mu)$.  \medskip

We now turn to the proof of~\eqref{eq:MainBoundTubeExistence}, which follows the outline of~\cite[Proposition 11]{dario2021quantitative}. 
To that end consider an independent copy of $\mu = \{\mu(e) \}_{e \in E(\mathbb{Z}^d)}$, denoted by $\widetilde{\mu}$ (by enlarging the probability space $(E(\mathbb{Z}^d), \mathcal{A}, \mathbb{Q})$ if necessary) and define for $e \in E(\mathbb{Z}^d)$ the environment $\{ \mu^e(e')\}_{e' \in E(\mathbb{Z}^d)}$ obtained by resampling the configuration at bond $e$ by setting 
\begin{equation}
\mu^e(e') = \mu(e') \mathbbm{1}_{\{e' \neq e \}} +  \widetilde{\mu}(e') \mathbbm{1}_{\{e' = e \}}, \qquad e' \in E(\mathbb{Z}^d).
\end{equation}
We let $\overline{\mathcal{Z}}_n^e(\varepsilon)$ stand for $\overline{\mathcal{Z}}_n(\varepsilon)$ evaluated for the configuration $\mu^e$, and drop the dependence on $\varepsilon$ to ease notation below. Next, let $\mathscr{C}_\infty^e$ stand for the infinite cluster in configuration $\mu^e$ and say that $T_{x,L}$ is an $e$-good open tube if $T_{x,L}$ is a good open tube in configuration $\mu^e$. Note that one has, $\mathbb{Q}$-a.s., that either $\mathscr{C}_\infty^e \subseteq \mathscr{C}_\infty$ or $\mathscr{C}_\infty \subseteq \mathscr{C}_\infty^e$. Observe that if $e \in E(B(0,3n))$, changing the configuration $\mu$ at bond $e$ can have two effects on $\overline{\mathcal{Z}}_n$, namely either a modification of the infinite cluster or the emergence or destruction of a good open tube. Note furthermore that the second effect pertains to at most $C \varepsilon \log(n)$ many points in $B(0,3n)$, namely those $x \in B(0,3n)$ for which the event that $T_{x,\varepsilon \log(n)}$ is a good open tube depends on the edge $e$, and is absent for $e \in E(\mathbb{Z}^d) \setminus E(B(0,3n))$, if $n$ is large enough. More precisely, we have for large enough $n$ that
\begin{equation}
\begin{split}
| \overline{\mathcal{Z}}_n^e - \overline{\mathcal{Z}}_n| & \leq \frac{1}{|B_o(0,n)|} \bigg\vert \sum_{x \in B_o(0,n)} \Big( \mathbbm{1}_{\{x \in \mathscr{C}_\infty^e, \text{$T_{x,\varepsilon \log(n)}$ is an $e$-good open tube}\}} \\
& \qquad \qquad \qquad \qquad - \mathbbm{1}_{\{x \in \mathscr{C}_\infty, \text{$T_{x,\varepsilon \log(n)}$ is a good open tube}\}}\Big) \bigg\vert \\
& \leq \frac{1}{|B_o(0,n)|} \bigg\vert \sum_{x \in B_o(0,n)} \mathbbm{1}_{ \{ \text{$T_{x,\varepsilon \log(n)}$ is an $e$-good open tube}\}} \Big( \mathbbm{1}_{\{x \in \mathscr{C}_\infty^e\}} - \mathbbm{1}_{\{x \in \mathscr{C}_\infty\}}\Big) \bigg\vert \\
& +\frac{1}{|B_o(0,n)|} \bigg\vert \sum_{x \in B_o(0,n)}
 \mathbbm{1}_{\{x \in \mathscr{C}_\infty\}}\Big(\mathbbm{1}_{ \{ \text{$T_{x,\varepsilon \log(n)}$ is an $e$-good open tube}\}} \\
 & \qquad \qquad \qquad \qquad -  \mathbbm{1}_{\{ \text{$T_{x,\varepsilon \log(n)}$ is a good open tube}\}}\Big) \bigg\vert \\
& \leq \frac{1}{|B_o(0,n)|} | (\mathscr{C}_\infty^e \triangle \mathscr{C}_\infty) \cap B_o(0,n) | + \frac{C \varepsilon \log(n) }{|B_o(0,n)|} \mathbbm{1}_{\{e \in E(B(0,3n))\}},
\end{split}
\end{equation}
where $\triangle$ denotes the symmetric difference between two sets. In particular, we see (using $(a+b)^2 \leq 2a^2+2b^2$ for $a,b \in \mathbb{R}$)
\begin{equation}
\label{eq:TwoTermsToBound}
\begin{split}
\sum_{e \in E(\mathbb{Z}^d)} (\overline{\mathcal{Z}}_n^e - \overline{\mathcal{Z}}_n)^2 & \leq  \frac{2}{|B_o(0,n)|^2}  \sum_{e \in E(\mathbb{Z}^d)} |(\mathscr{C}_\infty^e \triangle \mathscr{C}_\infty) \cap B_o(0,n)|^2  \\
& + \frac{2}{|B_o(0,n)|^2} \sum_{e \in E(B(0,3n))} (C \varepsilon \log(n))^2 \\
& \leq \frac{2}{|B_o(0,n)|^2}  \sum_{e \in E(\mathbb{Z}^d)} |(\mathscr{C}_\infty^e \triangle \mathscr{C}_\infty) \cap B_o(0,n)|^2 + \frac{C'\varepsilon^2 \log^2(n)}{n^d}.
\end{split}
\end{equation}
Now note that the random term in the last line of the display above can be bounded by~\cite[Lemma A.1]{dario2021quantitative} (see the calculation on p.623 of the same reference), giving 
\begin{equation}
\label{eq:Stochint1}
\frac{2}{|B_o(0,n)|^2}  \sum_{e \in E(\mathbb{Z}^d)} |(\mathscr{C}_\infty^e \triangle \mathscr{C}_\infty) \cap B_o(0,n)|^2 \leq \mathcal{O}_{\frac{d-1}{(3d+1)d}}\left(C(p)n^{-d}\right).
\end{equation}
The second term in the last line of~\eqref{eq:TwoTermsToBound} is deterministic and trivially fulfills the stochastic integrability bound 
\begin{equation}
\label{eq:Stochint2}
\frac{C'\varepsilon^2 \log^2(n)}{n^d} \leq \mathcal{O}_{\frac{d-1}{(3d+1)d}}\left(C n^{-(d-\delta)} \right)
\end{equation} 
for every $\delta > 0$ (where $C = C(\delta)$). By applying~\eqref{eq:OsAdditionProperty}, we obtain upon inserting~\eqref{eq:Stochint1} and~\eqref{eq:Stochint2} into~\eqref{eq:TwoTermsToBound} that
\begin{equation}
\sum_{e \in E(\mathbb{Z}^d)} (\overline{\mathcal{Z}}_n^e - \overline{\mathcal{Z}}_n)^2  \leq \mathcal{O}_{\frac{d-1}{(3d+1)d}}\left(C(p,\delta) n^{-(d-\delta)} \right).
\end{equation}  
One can now repeat the same arguments leading to the proof of~\cite[Proposition 11]{dario2021quantitative}, and we reproduce the main steps here for the convenience of the reader. We introduce the random variables
\begin{equation}
\begin{split}
\overline{\mathcal{Z}}_{n,e} & = \mathbb{E}_{\mathbb{Q}}\left[\overline{\mathcal{Z}}_n \big\vert \mathcal{A}(E(\mathbb{Z}^d) \setminus \{e\}) \right], \qquad e \in E(\mathbb{Z}^d), \text{ and} \\ 
\mathbb{V}[\overline{\mathcal{Z}}_n] & = \sum_{e \in E(\mathbb{Z}^d)} (\overline{\mathcal{Z}}_n - \overline{\mathcal{Z}}_{n,e})^2,
\end{split}
\end{equation}
where $\mathcal{A}(U) = \sigma( \mu_e \, : \, e \in U)$ for every $U \subseteq E(\mathbb{Z}^d)$. Here and in the following we denote by $\mathbb{E}_{\mathbb{Q}}$ the expectation under the probability measure $\mathbb{Q}$. We shall use the exponential Efron-Stein inequality alluded to earlier (\cite[Proposition 2.2]{armstrong2017optimal}), which states that for every $\beta \in (0,2)$,
\begin{equation}
\label{eq:ExpEfronStein}
\mathbb{E}_{\mathbb{Q}}\left[\exp(|\overline{\mathcal{Z}}_n|^\beta)  \right] \leq C(\beta) \mathbb{E}_{\mathbb{Q}}\left[\exp\left( (C(\beta)\mathbb{V}[\overline{\mathcal{Z}}_n])^{\frac{\beta}{2-\beta}}  \right) \right]^{\frac{2-\beta}{\beta}}.
\end{equation} 
Moreover, we have the implication 
\begin{equation}
\sum_{e \in E(\mathbb{Z}^d)} (\overline{\mathcal{Z}}_n^e - \overline{\mathcal{Z}}_n)^2  \leq \mathcal{O}_{\frac{d-1}{(3d+1)d}}\left(C(p,\delta) n^{-(d-\delta)} \right) \quad \Rightarrow \quad \mathbb{V}[\overline{\mathcal{Z}}_n] \leq \mathcal{O}_{\frac{d-1}{(3d+1)d}}\left( C(p,\delta) n^{-(d-\delta)}\right),
\end{equation}  
which follows from~\cite[Lemma 3.1]{gu2019efficient}.

We can then apply~\eqref{eq:ExpEfronStein} to obtain in the same way as in the proof of~\cite[Proposition 11]{dario2021quantitative} that for any $\delta > 0$, $C(p,\delta) \cdot \overline{\mathcal{Z}}_n \cdot n^{\frac{d-\delta}{2}} $ admits a small exponential moment, namely 
\begin{equation}
|\overline{\mathcal{Z}}_n| \leq \mathcal{O}_{c_6}\left( \frac{C(p,\delta)}{n^{\frac{d-\delta}{2}}} \right), \qquad \text{ i.e. } \qquad \mathbb{E}_{\mathbb{Q}} \left[\exp\left( \frac{|\overline{\mathcal{Z}}_n| }{C(p,\delta) n^{-\frac{d-\delta}{2} } } \right)^{c_6}\right] \leq 2
\end{equation}
(where $c_6 = \frac{2(d-1)}{3d^2+2d-1}$). The claim~\eqref{eq:MainBoundTubeExistence} now follows by applying the exponential Markov inequality. Indeed, 
\begin{equation}
\begin{split}
\mathbb{Q}\left[|\overline{\mathcal{Z}}_n| \geq \frac{\theta(p)}{2n^{c_3(p)\varepsilon}}  \right]  & \stackrel{\eqref{eq:ExpMarkov}}{\leq} 2 \exp\left( - \left( \frac{\theta(p) n^{\frac{d-\delta}{2} }}{2C(p,\delta) n^{c_3(p)\varepsilon}} \right)^{c_6} \right) \\
& = 2 \exp\left(-c'(p,\delta) n^{c_6\frac{d- \delta - 2c_3(p)\varepsilon}{2} } \right).
\end{split}
\end{equation} 
Since we chose $\varepsilon_0 = \frac{\delta}{4c_3(p)}$, this then finishes the proof.
 \end{proof}

\begin{remark}
As pointed out in~\cite[Remark 16]{dario2021quantitative}, the stochastic integrability (i.e.~the constant $c_6 = \frac{2(d-1)}{3d^2+2d-1}$) is suboptimal. However, the spatial scaling in the proof essentially confirms the informal discussion below the statement of Lemma~\ref{lem:ManyTubes}, namely that the number of (good) open tubes connected to $\mathscr{C}_\infty$ in a large box of radius $n$ is heavily concentrated around $\theta(p)\cdot n^{d-\delta}$ for arbitrarily small $\delta > 0$ (provided $\varepsilon$ is small enough, depending on $p$ and $\delta$), which would be the expected number if the occurrence of good open tubes was independent. 
\end{remark}

We have now proved that for typical realizations of $\mu \in \{0 \in \mathscr{C}_\infty\}$, there is a sizeable amount of (good) open tubes with starting points in both $B_o(z,n^{\frac{1}{4}}) \cap \mathscr{C}_\infty$ and $B_e(z,n^{\frac{1}{4}}) \cap \mathscr{C}_\infty$ for every $z \in \mathbb{Z}^d$ with $|z|_\infty \leq n$. The next lemma shows that for every large enough $n$, a random walk started at any point $y \in B(0,n)$ enters a tube of length $[\varepsilon \log(n)]$ and spends a time $[\varepsilon \log(n)]^3$ there within its first $[\sqrt{n}]$ steps with a probability decaying not faster than $\frac{1}{n^\eta}$, for arbitrarily small $\eta > 0$.

\begin{lemma}
\label{lem:QuenchedEstimate}
Let $\eta > 0$. There exists $\varepsilon > 0$ (depending on $\eta,p$) such that for any $y \in B(0,n) \cap \mathscr{C}_\infty$, and $\mathbb{Q}$-a.e.~$\mu \in \Omega_0$, we have
\begin{equation}
\label{eq:MeetingOneTube}
\begin{split}
& P_{y,\mu}[\text{$(X_k)_{k = 0,...,[\sqrt{n}]}$ takes at least $[\varepsilon \log(n)]^3$ consecutive steps } \\
& \qquad \qquad  \text{ in an open tube of length $[\varepsilon \log(n)]$}]  \geq \frac{c(\mu,\eta)}{n^\eta},
\end{split}
\end{equation}
for every $n \geq N_1(\mu,\eta)$ with $N_1(\mu,\eta) < \infty$.
\end{lemma}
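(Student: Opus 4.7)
The strategy is to combine the Gaussian heat-kernel lower bound \eqref{eq:HKLB} with Lemma~\ref{lem:ManyTubes} to show that within time $k_0 \simeq \sqrt{n}$ the walk sits at some good open tube base $x$ with probability at least $c\, n^{-\delta/4}$, and then to exploit that the interior of a good tube forces the walk to behave like a one-dimensional reflected random walk, from which an analog of Lemma~\ref{lem:DeviationExitTime} yields that the walk remains inside the tube for $[\varepsilon \log n]^3$ further steps with probability at least $n^{-C_1 \varepsilon}$. The strong Markov property at the hitting time then produces the desired estimate with exponent $\eta = \delta/4 + C_1 \varepsilon$, which can be made arbitrarily small.

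\emph{Reaching a tube base.} Fix a small $\delta>0$ (to be chosen later in terms of $\eta$) and $\varepsilon \in (0, \varepsilon_0(p,\delta)/4)$. For $\mathbb{Q}$-a.e.~$\mu$ and $n$ large enough, Lemma~\ref{lem:ShortMixingTimesCube} (applied with some small $\gamma < 1/2$) gives $R_z(\mu) \le n^\gamma$ for every $z \in \mathscr{C}_\infty \cap B(0,n)$, and Lemma~\ref{lem:ManyTubes} (summing the two parity contributions $B_o(y,n^{1/4})$ and $B_e(y,n^{1/4})$) yields that the set
\begin{equation*}
\mathcal{S}_y = \bigl\{ x \in B(y,n^{1/4}) \cap \mathscr{C}_\infty \, : \, T_{x,\varepsilon \log n} \text{ is a good open tube}\bigr\}
\end{equation*}
has cardinality at least $c(p)\, n^{(d-\delta)/4}$. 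Set $k_0 = [\sqrt{n}/2]$. For every $x \in \mathcal{S}_y$ we have $k_0 \ge R_y(\mu) \vee |x-y|_1$ and $|x-y|^2/k_0 \le C$ for $n$ large, so \eqref{eq:HKLB} gives $P_{y,\mu}[X_{k_0}=x \text{ or } X_{k_0+1}=x] \ge c'\, n^{-d/4}$. Summing over $x \in \mathcal{S}_y$ and using disjointness of $\{X_k=x\}$ in $x$ at a fixed time $k$ yields $P_{y,\mu}[X_{k_0} \in \mathcal{S}_y] + P_{y,\mu}[X_{k_0+1} \in \mathcal{S}_y] \ge c''\, n^{-\delta/4}$, so there exists $T \in \{k_0,k_0+1\}$ with $P_{y,\mu}[X_T \in \mathcal{S}_y] \ge c''' n^{-\delta/4}$.

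\emph{Remaining inside the tube.} Fix $x \in \mathcal{S}_y$ and write $L = [\varepsilon \log n]$. Since the perpendicular edges along $V(T_{x,L}) \setminus \{x\}$ and the forward edge $\{x+Le_1, x+(L+1)e_1\}$ are all closed, as long as the walk occupies positions $\{x+e_1,\dots,x+Le_1\}$ it is a symmetric simple random walk on $\{1,\dots,L\}$ with reflection at $L$. I claim
\begin{equation*}
P_{x,\mu}\bigl[X_j \in V(T_{x,L}) \text{ for all } 0 \le j \le L^3\bigr] \;\ge\; n^{-C_1 \varepsilon},
\end{equation*}
obtained by composing (a) the first step from $x$ into $x+e_1$ with probability $\ge 1/(2d)$; (b) the deterministic forward excursion from position $1$ to $[L/2]$ in $[L/2]-1$ steps, with probability $\ge 2^{-[L/2]}$; and (c) starting from $[L/2]$, an adaptation of the proof of Lemma~\ref{lem:DeviationExitTime}: the killed heat kernel on $\{1,\dots,L\}$ admits a Gaussian lower bound at sites in $[L/4,3L/4]$ by the same argument as in \cite[Theorem 4.25]{barlow2017random}, and iterating the simple Markov property at times $L^2, 2L^2, \dots, L^3$ gives survival probability at least $\exp(-cL) = n^{-c\varepsilon}$ in $\{1,\dots,L\}$ for the next $L^3$ steps. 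The product is at least $n^{-C_1 \varepsilon}$.

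\emph{Conclusion and main obstacle.} Applying the strong Markov property at $T$, and noting that $T + L^3 \le [\sqrt{n}]$ for $n$ large, one obtains $P_{y,\mu}[\text{event in } \eqref{eq:MeetingOneTube}] \ge c''' n^{-\delta/4} \cdot n^{-C_1 \varepsilon}$. Given $\eta > 0$, one first picks $\delta \in (0, 2\eta)$ (fixing $\varepsilon_0(p,\delta)$) and then $\varepsilon \in (0, \varepsilon_0/4)$ small enough that $\delta/4 + C_1 \varepsilon \le \eta$, which finishes the proof. The principal technical difficulty is Step 2: although the reduction of the in-tube walk to a one-dimensional reflected random walk is heuristically clear, it requires careful justification from the closed-edge conditions in the definition of a good tube, and the ensuing survival bound on the interval $\{1,\dots,L\}$ must be derived as a quantitative analog of Lemma~\ref{lem:DeviationExitTime}, uniformly in the tube base $x$. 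A secondary (but essential) subtlety is the parity-splitting in Lemma~\ref{lem:ManyTubes}, needed because \eqref{eq:HKLB} controls only $X_n$ or $X_{n+1}$, hence good tube bases of both parities must be available to extract the lower bound at one of the times $\{k_0, k_0+1\}$.
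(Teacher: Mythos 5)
Your proposal is correct and follows essentially the same route as the paper: heat-kernel lower bound \eqref{eq:HKLB} plus Lemma~\ref{lem:ManyTubes} to hit a tube base at time of order $\sqrt{n}$ at cost $n^{-\delta/4}$, a forced march to the middle of the tube at cost $2^{-c\varepsilon\log n}$, and the one-dimensional exit-time estimate of Lemma~\ref{lem:DeviationExitTime} to stay there for $[\varepsilon\log n]^3$ steps at cost $n^{-c\varepsilon}$, combined by the Markov property. The only (immaterial) differences are that you resolve the parity issue by pigeonholing over the two times $k_0,k_0+1$ where the paper uses one fixed time and the minimum of the odd/even tube counts, and that you re-run the proof of Lemma~\ref{lem:DeviationExitTime} on $\{1,\dots,L\}$ rather than quoting it directly.
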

\begin{proof} 
We can work with a fixed realization $\mu \in  \Omega_1$ of the percolation cluster such that both the claim of Lemma~\ref{lem:ShortMixingTimesCube} with $\gamma = \frac{1}{8}$ and~\eqref{eq:ManyTubes}  for some $\varepsilon_0 > 0$ are fulfilled. In particular (with $\delta > 0$ to be chosen later), for every $n \geq N_1(\mu,\delta) \stackrel{\text{def}}{=} N_0^{\text{reg}}(\mu) \vee N_{\text{tube}}(\mu,\delta)$, we have that $R_y < n^{\frac{1}{8}}$ for every $y \in \mathscr{C}_\infty \cap B(0,n)$ and there are $c(p)n^{\frac{d-\delta}{4}}$ open tubes $T_{x,\varepsilon \log(n)}$ with $x \in B_o(y,n^{\frac{1}{4}}) \cap \mathscr{C}_\infty$, i.e.~we have $|\mathcal{T} \cap B_o(y,n^{\frac{1}{4}})| \geq c(p)n^{\frac{d-\delta}{4}}$, where $\mathcal{T}$ denotes the set of $x \in \mathscr{C}_\infty$ such that $T_{x,\varepsilon \log(n)}$ is an open tube. Similarly, we also have $|\mathcal{T} \cap B_e(y, n^{\frac{1}{4}})| \geq c(p)n^{\frac{d-\delta}{4}}$. We then consider the event (for $n$ large enough and $\varepsilon < \frac{1}{4}\varepsilon_0$)
\begin{equation}
\begin{split}
& \mathcal{I}_n  = \{X_{[\tfrac{1}{4}\sqrt{n}]} \in \mathcal{T}, X_j \in V(T_{X_{[\tfrac{1}{4}\sqrt{n}]},\varepsilon\log(n)}) \text{ for } j \in \{[\tfrac{1}{4}\sqrt{n}],...., [\tfrac{1}{4}\sqrt{n}] + [\varepsilon \log(n)]^3\} \}
\end{split}
\end{equation}
which is contained in the event under the probability in~\eqref{eq:MeetingOneTube}. Note that
\begin{equation}
\label{eq:GoesToTube}
\begin{split}
P_{y,\mu}[X_{[\tfrac{1}{4}\sqrt{n}]} \in \mathcal{T}] & \geq \sum_{x \in \mathcal{T} \cap B_o(y,n^{\frac{1}{4}}) } P_{y,\mu}[X_{[\tfrac{1}{4}\sqrt{n}]} =x] + \sum_{x \in \mathcal{T} \cap B_e(y,n^{\frac{1}{4}}) } P_{y,\mu}[X_{[\tfrac{1}{4}\sqrt{n}]} =x] \\ 
& \stackrel{\eqref{eq:HKLB}}{\geq} (|\mathcal{T} \cap B_o(y,n^{\frac{1}{4}} )| \wedge |\mathcal{T} \cap B_e(y,n^{\frac{1}{4}} )|) \frac{c}{n^{\frac{d}{4}}} \exp\left(- c \frac{\sup_{x \in B(y,n^{\frac{1}{4}})} | x-y|^2}{\sqrt{n}} \right) \\
& \geq \frac{c}{n^{\frac{\delta}{4}}}.
\end{split}
\end{equation}
Suppose now that $T_{x,\varepsilon \log(n)}$ is an open tube with $x \in \mathscr{C}_\infty$. Then 
\begin{equation}
\label{eq:GoesToMiddle}
P_{x,\mu}[X_j = x + je_1, j \in \{1,..., [\varepsilon\log(n)/2]\}] \geq \frac{1}{2d}\left( \frac{1}{2}\right)^{\varepsilon \log(n)}.
\end{equation}
Note that conditionally on the event under the probability in~\eqref{eq:GoesToMiddle}, the random walk is located at a point with distance larger than $\frac{1}{4} \varepsilon \log(n)$ to the end points $x$ and $x + [\varepsilon \log(n)]e_1$ of the tube $T_{x,\varepsilon \log(n)}$. By Lemma~\ref{lem:DeviationExitTime}, we find that for any $z \in V(T_{x,\varepsilon \log(n)})$ with distance larger than $\frac{1}{4}\varepsilon \log(n)$ to these end points:
\begin{equation}
\label{eq:StaysInTube}
P_{z,\mu}[\{X_0,...,X_{[\varepsilon\log(n)]^3} \} \subseteq V(T_{x,\varepsilon \log(n)}) ] \geq \exp\left(-c \varepsilon \log(n) \right) = \frac{1}{n^{c\varepsilon}}.
\end{equation}
Upon combining~\eqref{eq:GoesToTube},~\eqref{eq:GoesToMiddle}, and~\eqref{eq:StaysInTube} and using the simple Markov property at the entrance time $[\frac{1}{4}\sqrt{n}]$ into $\mathcal{T}$ resp.~at time $[\varepsilon \log(n)/2]$, we see that
\begin{equation}
P_{y,\mu}[\mathcal{I}_n] \geq \frac{c}{n^{c'\varepsilon + \frac{\delta}{4}}}.
\end{equation}
The claim now follows by choosing $\delta = \eta$ and then letting $\varepsilon < \frac{3\eta}{4c'} \wedge \frac{1}{4} \varepsilon_0(p,\delta)$.
\end{proof}
We are now ready to prove Proposition~\ref{prop:NoTubeAvoidance}. This will be done by iterating the statement of Lemma~\ref{lem:QuenchedEstimate} at times which are multiples of $[\sqrt{n}]$. In each of the $[\sqrt{n}]$ steps between two such times, the walk has a good chance of entering a tube and spending an atypically large time there. 
\begin{proof}[Proof of Proposition~\ref{prop:NoTubeAvoidance}]

Let $\mu$ be an element of a subset of $\{ 0 \in \mathscr{C}_\infty\}$ of full $\mathbb{Q}_0$-measure such that the statement of Lemma~\ref{lem:QuenchedEstimate} is fulfilled for some small enough $\varepsilon > 0$, and let $n \geq N_1(\mu)$. We define the following events $\mathcal{A}_n^{(j)}$ for $j = 0,...,  [\frac{1}{2} \sqrt{n}]$: 
\begin{equation}
 \begin{minipage}{0.8\linewidth}\begin{center}
               $\mathcal{A}_n^{(j)} = \Big\{(X_k)_{k \in \{j[\sqrt{n}],...,(j+1)[\sqrt{n}]\}}$ takes at least $[\varepsilon \log(n)]^3$ consecutive steps in an open tube of length $[\varepsilon \log(n)] \Big\}$.\end{center}
            \end{minipage}
\end{equation}
Clearly, $\bigcup_{j = 0}^{[\frac{1}{2}\sqrt{n}]} \mathcal{A}_n^{(j)} \subseteq \mathcal{A}_n$, so we have the following upper bound for the probability of $\mathcal{A}_n^c$:
\begin{equation}
\label{eq:ToIterate}
P_{0,\mu}\left[\mathcal{A}_n^c \right] \leq P_{0,\mu}\left[\bigcap_{j = 0}^{[\frac{1}{2}\sqrt{n}]} (\mathcal{A}_n^{(j)})^c \right].
\end{equation}
Note that $\mathcal{A}_n^{(0)}$ coincides with the event under the probability in~\eqref{eq:MeetingOneTube}. Now for any $j \in \{0,1,...,[\frac{1}{2}\sqrt{n}]\}$, at any time $t_j = j \cdot [\sqrt{n}] \leq \frac{1}{2}n$, the random walk is located at some point $x \in \mathscr{C}_\infty \cap B(0,\frac{1}{2}n)$. Upon applying the simple Markov property at time at time $t_{[\frac{1}{2} \sqrt{n}]}$, we find that
\begin{equation}
\begin{split}
P_{0,\mu}\left[\mathcal{A}_n^c \right] & \leq P_{0,\mu}\left[\bigcap_{j = 0}^{[\frac{1}{2}\sqrt{n}]-1} (\mathcal{A}_n^{(j)})^c \right] \cdot \sup_{x \in \mathscr{C}_\infty \cap B(0,\frac{1}{2}n)} P_{x,\mu}\left[(\mathcal{A}_n^{(0)})^c \right] \\
&  \stackrel{\text{Lemma~\ref{lem:QuenchedEstimate}}}{\leq}  P_{0,\mu}\left[\bigcap_{j = 0}^{[\frac{1}{2}\sqrt{n}]-1} (\mathcal{A}_n^{(j)})^c \right] \cdot \left(1 - \frac{c(\mu,\eta)}{n^\eta}\right).
\end{split}
\end{equation}
By applying the simple Markov property iteratively at times $t_{[\frac{1}{2}\sqrt{n}]-1},t_{[\frac{1}{2}\sqrt{n}]-2},$ ... $t_{1}$, we find that
\begin{equation}
P_{0,\mu}\left[\mathcal{A}_n^c \right] \leq \left(1 - \frac{c(\eta,\mu)}{n^\eta} \right)^{[\frac{1}{2}\sqrt{n}]+1},
\end{equation}
which converges to $0$ provided that $\eta < \frac{1}{2}$.
\end{proof}

\begin{remark}
1) It is instructive to compare the behavior of directed polymers on the infinite cluster of Bernoulli percolation to the corresponding simple random walk. Indeed, by the results of~\cite{sidoravicius2004quenched} for $d \geq 4$ and the extension to $d\geq 2$ in~\cite{berger2007quenched,mathieu2007quenched}, the simple random walk on the infinite cluster fulfills a quenched invariance principle on $\mathbb{Q}_0$-a.e.~realization of the percolation configuration. It therefore behaves diffusively (recall that this is also the case for random polymers for $\beta < \beta_c(d)$ when $d \geq 3$ on the integer lattice $\mathbb{Z}^d$). In the situation of directed polymers, we see that for every non-zero value of the inverse temperature, one has strong localization of the corresponding trajectory in all dimensions $d \geq 2$. In other words: The presence of any (small) disorder in the graph already destroys the diffusive behavior exhibited by the polymer on the full lattice. \medskip

2) It seems plausible that the methods used here are pertinent to treat the case where Bernoulli percolation is replaced by some model with finite range dependence. The situation becomes less clear if we consider percolation models with long-range dependence, in particular in the case of algebraically decaying correlations, such as random interlacements, the vacant set of random interlacements or level-sets of Gaussian fields. We mention that certain properties of Bernoulli percolation (such as the regularity of chemical distances on the infinite cluster or a quenched invariance principle) are still valid for such strongly correlated models, see~\cite{drewitz2014chemical,procaccia2016quenched}. 
\medskip

3) In a different direction, one might also naturally wonder what can be said about \textit{very strong disorder}, see~\cite[Section 1.3]{cosco2021directed} in our context. Roughly speaking, very strong disorder occurs when $p(\beta) = \limsup_{n \rightarrow \infty} \frac{1}{n} \mathbb{E}[\log W_{n,\mu}] < 0$ (the existence of this limit when $\mu \in \{0 \in \mathcal{C}_\infty\}$ follows from~\cite[Proposition 1.8]{cosco2021directed}), and one can show that there exists $\beta_c = \overline{\beta}_c(\mu) \geq 0$ such that $p(\beta) = 0$ when $\beta < \overline{\beta}_c$, and $p(\beta) < 0$ when $\beta > \overline{\beta}_c$ (see again~\cite[Proposition 1.8]{cosco2021directed}). From~\cite[Proposition 1.9]{cosco2021directed}, we can infer that very strong disorder holds for large enough $\beta$ (possibly depending on $\mu$), for $\mathbb{Q}_0$-a.e.~$\mu \in \{0 \in \mathcal{C}_\infty\}$, provided that the law of $\omega(i,x)$ is unbounded from above. On the full lattice $\mathbb{Z}^d$, one knows that $\overline{\beta}_c = 0$ when $d \leq 2$ (see~\cite{comets2006majorizing,lacoin2010new}), and that $\overline{\beta}_c > 0$ when $d\geq 3$ (see~\cite{comets2017directed}), however it is currently not known whether $\overline{\beta}_c$ and $\beta_c$ coincide on $\mathbb{Z}^d$, $d\geq 3$. It is plausible that the methods developed in this paper might be helpful to study whether or not $\overline{\beta}_c = 0$ for $\mathbb{Q}_0$-a.e.~$\mu \in \{0 \in \mathcal{C}_\infty\}$ holds, and whether bounds for the behavior of $p(\beta)$ close to $0$ can be obtained. For both these questions, one progress might come from combining the techniques in~\cite[Sections 4, 7]{lacoin2010new} with (strengthened) versions of the controls on the underlying random walk on the cluster obtained in the present article.
\end{remark}

\subsection*{Acknowledgements}
The author wishes to thank Ofer Zeitouni for suggesting this problem and for numerous inspiring discussions on this topic, and the anonymous referee for helpful comments and suggestions.

\bibliographystyle{abbrv}

\begin{thebibliography}{10}

\bibitem{armstrong2019quantitative}
S.~Armstrong, T.~Kuusi, and J.-C. Mourrat.
\newblock {\em Quantitative stochastic homogenization and large-scale
  regularity}, Grundlehren der mathematischen {W}issenschaften, volume 352.
\newblock Springer, 2019.

\bibitem{armstrong2017optimal}
S.~Armstrong and J.~Lin.
\newblock Optimal quantitative estimates in stochastic homogenization for
  elliptic equations in nondivergence form.
\newblock {\em Arch. Ration. Mech. Anal.}, 225(2):937--991, 2017.

\bibitem{barlow2004random}
M.~Barlow.
\newblock Random walks on supercritical percolation clusters.
\newblock {\em Ann. Probab.}, 32(4):3024--3084, 2004.

\bibitem{barlow2017random}
M.~Barlow.
\newblock {\em Random walks and heat kernels on graphs}, volume 438.
\newblock Cambridge University Press, 2017.

\bibitem{berger2007quenched}
N.~Berger and M.~Biskup.
\newblock Quenched invariance principle for simple random walk on percolation
  clusters.
\newblock {\em Probab. Theory Relat. Fields}, 137(1):83--120, 2007.

\bibitem{berger2010critical}
Q.~Berger and F.~Toninelli.
\newblock On the critical point of the random walk pinning model in dimension
  d= 3.
\newblock {\em Electr. J. Probab.}, 15:654--683, 2010.

\bibitem{birkner2011collision}
M.~Birkner, A.~Greven, and F.~den Hollander.
\newblock Collision local time of transient random walks and intermediate
  phases in interacting stochastic systems.
\newblock {\em Electr. J. Probab.}, 16:552--586, 2011.

\bibitem{birkner2010annealed}
M.~Birkner and R.~Sun.
\newblock Annealed vs. quenched critical points for a random walk pinning
  model.
\newblock {\em Ann. Inst. Henri Poincar{\'e} Probab. Stat.}, 46(2):414--441,
  2010.

\bibitem{birkner2011disorder}
M.~Birkner and R.~Sun.
\newblock Disorder relevance for the random walk pinning model in dimension 3.
\newblock {\em Ann. Inst. Henri Poincar{\'e} Probab. Stat.}, 47(1):259--293,
  2011.

\bibitem{brunet2000probability}
{\'E}.~Brunet and B.~Derrida.
\newblock Probability distribution of the free energy of a directed polymer in
  a random medium.
\newblock {\em Phys. Rev. E}, 61(6):6789, 2000.

\bibitem{buffet1993directed}
E.~Buffet, A.~Patrick, and J.~V. Pul{\'e}.
\newblock Directed polymers on trees: a martingale approach.
\newblock {\em J. Phys. A Math. Gen.}, 26(8):1823, 1993.

\bibitem{carmona2006strong}
P.~Carmona, F.~Guerra, Y.~Hu, and O.~Mejane.
\newblock Strong disorder for a certain class of directed polymers in a random
  environment.
\newblock {\em J. Theor. Probab.}, 19(1):134--151, 2006.

\bibitem{carmona2002partition}
P.~Carmona and Y.~Hu.
\newblock On the partition function of a directed polymer in a gaussian random
  environment.
\newblock {\em Probab. Theory Relat. Fields}, 124(3):431--457, 2002.

\bibitem{comets2017directed}
F.~Comets.
\newblock {\em Directed polymers in random environments}.
\newblock Springer, 2017.

\bibitem{comets2019random}
F.~Comets, G.~Moreno, and A.~Ram{\'i}rez.
\newblock Random polymers on the complete graph.
\newblock {\em Bernoulli}, 25(1):683--711, 2019.

\bibitem{comets2003directed}
F.~Comets, T.~Shiga, and N.~Yoshida.
\newblock Directed polymers in a random environment: path localization and
  strong disorder.
\newblock {\em Bernoulli}, 9(4):705--723, 2003.

\bibitem{comets2006majorizing}
F.~Comets and V.~Vargas.
\newblock Majorizing multiplicative cascades for directed polymers in random media.
\newblock {\em ALEA
Lat. Am. J. Probab. Math. Stat.}, 2, 267-–277, 2006.

\bibitem{comets2006directed}
F.~Comets and N.~Yoshida.
\newblock Directed polymers in random environment are diffusive at weak
  disorder.
\newblock {\em Ann. Probab.}, 34(5):1746--1770, 2006.

\bibitem{cosco2021directed}
C.~Cosco, I.~Seroussi, and O.~Zeitouni.
\newblock Directed polymers on infinite graphs.
\newblock {\em Commun. Math. Phys.}, 386(1):395--432, 2021.

\bibitem{dario2021quantitative}
P.~Dario and C.~Gu.
\newblock Quantitative homogenization of the parabolic and elliptic Green’s
  functions on percolation clusters.
\newblock {\em Ann. Probab.}, 49(2):556--636, 2021.

\bibitem{den2009random}
F.~den Hollander.
\newblock {\em Random Polymers: {\'E}cole d’{\'E}t{\'e} de Probabilit{\'e}s
  de Saint-Flour XXXVII--2007}.
\newblock Springer, 2009.

\bibitem{derrida1988polymers}
B.~Derrida and H.~Spohn.
\newblock Polymers on disordered trees, spin glasses, and traveling waves.
\newblock {\em J. Stat. Phys.}, 51(5):817--840, 1988.

\bibitem{drewitz2014chemical}
A.~Drewitz, B.~R{\'a}th, and A.~Sapozhnikov.
\newblock On chemical distances and shape theorems in percolation models with
  long-range correlations.
\newblock {\em J. Math. Phys.}, 55(8):083307, 2014.

\bibitem{eckmann1989largest}
J.-P. Eckmann and C.~E. Wayne.
\newblock The largest {L}iapunov exponent for random matrices and directed
  polymers in a random environment.
\newblock {\em Commun. Math. Phys.}, 121(1):147--175, 1989.

\bibitem{grimmett1999percolation}
G.~Grimmett.
\newblock {\em Percolation}.
\newblock Springer, 1999.

\bibitem{gu2019efficient}
C.~Gu.
\newblock An efficient algorithm for solving elliptic problems on percolation
  clusters.
\newblock {\em Ann. Appl. Probab.}, 32(4):2755--2810, 2022.

\bibitem{kajino2020two}
N.~Kajino, K.~Konishi, and M.~Nakashima.
\newblock Two-sided bounds on free energy of directed polymers on strongly
  recurrent graphs.
\newblock {\em arXiv preprint arXiv:2010.12312}, 2020.

\bibitem{krengel1985ergodic}
U.~Krengel.
\newblock {\em Ergodic theorems}.
\newblock Walter de Gruyter, 1985.

\bibitem{lacoin2010new}
H.~Lacoin.
\newblock New bounds for the free energy of directed polymers in dimension 1+ 1
  and 1+ 2.
\newblock {\em Commun. Math. Phys.}, 294(2):471--503, 2010.

\bibitem{mathieu2007quenched}
P.~Mathieu and A.~Piatnitski.
\newblock Quenched invariance principles for random walks on percolation
  clusters.
\newblock {\em Proc. R. Soc. Lond. Ser. A Math. Phys. Eng. Sci.},
  463(2085):2287--2307, 2007.

\bibitem{procaccia2016quenched}
E.~Procaccia, R.~Rosenthal, and A.~Sapozhnikov.
\newblock Quenched invariance principle for simple random walk on clusters in
  correlated percolation models.
\newblock {\em Probab. Theory Relat. Fields}, 166(3-4):619--657, 2016.

\bibitem{seroussi2018spectral}
I.~Seroussi and N.~Sochen.
\newblock Spectral analysis of a non-equilibrium stochastic dynamics on a
  general network.
\newblock {\em Sci. Rep.}, 8(1):1--10, 2018.

\bibitem{sidoravicius2004quenched}
V.~Sidoravicius and A.-S. Sznitman.
\newblock Quenched invariance principles for walks on clusters of percolation
  or among random conductances.
\newblock {\em Probab. Theory Relat. Fields}, 129(2):219--244, 2004.

\end{thebibliography}
\newcommand{\noop}[1]{} \def\cprime{$'$}

\end{document}